\numberwithin{equation}{section}
\numberwithin{figure}{section}
\theoremstyle{plain}
\newtheorem{thm}{\protect\theoremname}
\theoremstyle{plain}
\newtheorem{cor}[thm]{\protect\corollaryname}
\theoremstyle{definition}
\newtheorem{defn}[thm]{\protect\definitionname}
\theoremstyle{remark}
\newtheorem{rem}[thm]{\protect\remarkname}
\author{Sungkyung Kang}
\address{Institute of Mathematical Sciences, The Chinese University of Hong Kong, Shatin, N.T. Hong Kong}
\email{skkang@math.cuhk.edu.hk}
\subjclass[2010]{57M27}
\keywords{Link TQFT; Ribbon concordances}
\providecommand{\corollaryname}{Corollary}
\providecommand{\definitionname}{Definition}
\providecommand{\remarkname}{Remark}
\providecommand{\theoremname}{Theorem}
\begin{document}
\title{Link homology theories and ribbon concordances}
\begin{abstract}
It was recently proved by several authors that ribbon concordances
induce injective maps in knot Floer homology, Khovanov homology, and
the Heegaard Floer homology of the branched double cover. We give
a simple proof of a similar statement in a more general setting, which
includes knot Floer homology, Khovanov-Rozansky homologies, and all
conic strong Khovanov-Floer theories. This gives a philosophical answer
to the question of which aspects of a link TQFT make it injective
under ribbon concordances.
\end{abstract}

\maketitle

\section{Introduction}

Given two knots $K_{1}$ and $K_{2}$, smoothly embedded in $S^{3}$,
a (smooth) cobordism from $K_{1}$ to $K_{2}$ is a smoothly embedded
surface $S$ in $S^{3}\times I$ such that $\partial C=(K_{1}\times\{0\})\sqcup(K_{2}\times\{1\})$.
A connected cobordism is called a concordance its genus is zero. By
endowing $S$ with a Morse function, it is easy to see that every
knot (or, in general, link) cobordism consists of births, saddles,
and deaths; when $S$ is a concordance and there are no deaths needed
to construct $S$, we say that $S$ is a ribbon concordance.

Unlike the knot concordance relation, which is symmetric, having a
ribbon concordance from a knot to another is not a symmetric relation.
It is conjectured by Gordon\cite{gordon1981ribbon} that existence
of a ribbon concordance induces a partial order on each knot concordance
class. There are several results in that direction, starting with
Gordon's result\cite{gordon1981ribbon}, that if $C$ is a ribbon
concordance from $K_{1}$ to $K_{2}$, then the map $\pi_{1}(S^{3}\backslash K_{1})\rightarrow\pi_{1}((S^{3}\times I)\backslash C)$
is injective and $\pi_{1}(S^{3}\backslash K_{2})\rightarrow\pi_{1}((S^{3}\times I)\backslash C)$
is surjective. 

Recently, it was proved by Zemke\cite{zemke2019knot} that, after
endowing $C$ with a suitable decoration, the cobordism map 
\[
\hat{F}_{C}:\widehat{HFK}(K_{1})\rightarrow\widehat{HFK}(K_{2})
\]
 is injective, so that if there exists a ribbon concordance from $K_{1}$
to $K_{2}$ and also from $K_{2}$ to $K_{1}$, then $\widehat{HFK}(K_{1})\simeq\widehat{HFK}(K_{2})$
as bigraded vector spaces. This result was then extended to other
link homology theories. For example, Levine and Zemke\cite{levine2019khovanov}
proved that the map 
\[
\mathbf{Kh}(C):\mathbf{Kh}(K_{1})\rightarrow\mathbf{Kh}(K_{2})
\]
 is injective. Later, Lidman, Vela-Vick, and Wong \cite{lidman2019heegaard}
proved that the map 
\[
\hat{F}_{\Sigma(C)}:\widehat{HF}(\Sigma(K_{1}))\rightarrow\widehat{HF}(\Sigma(K_{2}))
\]
 is also injective, where $\Sigma(K_{i})$ is the branched double
cover of $S^{3}$ along $K_{i}$ and $\Sigma(C)$ is the branched
double cover of $S^{3}\times I$ along $C$. Note that $\Sigma(C)$
is a 4-dimensional smooth cobordism from the 3-manifold $\Sigma(K_{1})$
to $\Sigma(K_{2})$.

However, the proofs of the above results rely on some special properties
that the link homology theories $\widehat{HFL}$, $\mathbf{Kh}$,
and $\widehat{HF}\circ\Sigma$ have. The proof of injectivity for
$\widehat{HFK}$ depends on its generalization to a TQFT of null-homologous
links in 3-manifolds, and the proof for $\mathbf{Kh}$ uses the fact
that it satisfies the neck-cutting relation in the dotted cobordism
category. Furthermore, the proof for $\widehat{HF}\circ\Sigma$ uses
graph cobordisms, defined and studied originally by Zemke\cite{zemke2015graph}.

In this paper, we give a simple proof of the injectivity of maps induced
by ribbon concordance in a much more general setting. The link homology
theories that we can use are multiplicative link TQFTs which are either
associative or Khovanov-like, whose definitions will be given in the
next section. In particular, our main theorem is the following.
\begin{thm}
\label{MainThm}Let $C$ be a ribbon concordance and $F$ be a multiplicative
TQFT of oriented links in $S^{3}$, which is either associative or
Khovanov-like. Then $F(C)$ is injective, and $F(\bar{C})$ is its
left inverse.
\end{thm}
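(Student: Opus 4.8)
The plan is to reduce the theorem to the single identity $F(\bar{C}\circ C)=\mathrm{id}_{F(K_1)}$. Granting this, functoriality of the link TQFT gives $F(\bar{C})\circ F(C)=F(\bar{C}\circ C)=\mathrm{id}_{F(K_1)}$, so $F(C)$ is injective with left inverse $F(\bar{C})$; one should not expect a two-sided inverse, since $\bar{C}$ is not itself a ribbon concordance. To analyze $\bar{C}\circ C$, I would put $C$ in Morse normal form: being a ribbon concordance, $C$ is isotopic rel boundary to a composition $\Sigma\circ B$, where $B\colon K_1\to K_1\sqcup U$ is a sequence of $n$ births producing an $n$-component unlink $U=U_1\sqcup\cdots\sqcup U_n$ split from $K_1$, and $\Sigma\colon K_1\sqcup U\to K_2$ is a sequence of $n$ merge saddles whose merge pattern is a spanning tree on $\{K_1,U_1,\dots,U_n\}$. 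Dually, $\bar{C}=\bar{B}\circ\bar{\Sigma}$ with $\bar{\Sigma}$ a sequence of $n$ split saddles and $\bar{B}$ a sequence of $n$ deaths, so that $\bar{C}\circ C=\bar{B}\circ(\bar{\Sigma}\circ\Sigma)\circ B$.

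Write $A=F(U_1)$ for $F$ of a single unknot; by multiplicativity, $F$ of a disjoint union is the tensor product, and a birth and a death of an unknot induce a unit $\eta\colon F(\varnothing)\to A$ and a counit $\varepsilon\colon A\to F(\varnothing)$. Thus $F(B)=\mathrm{id}_{F(K_1)}\otimes\eta^{\otimes n}$ and $F(\bar{B})=\mathrm{id}_{F(K_1)}\otimes\varepsilon^{\otimes n}$, and the theorem reduces to
\[ (\mathrm{id}\otimes\varepsilon^{\otimes n})\circ F(\bar{\Sigma}\circ\Sigma)\circ(\mathrm{id}\otimes\eta^{\otimes n})=\mathrm{id}_{F(K_1)}. \]
This is the crux, and it is exactly where the hypothesis that $F$ is multiplicative and either associative or Khovanov-like is used. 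The cobordism $\bar{\Sigma}\circ\Sigma$ is a run of merge saddles followed by the reverse run of split saddles; it is \emph{not} isotopic rel boundary to the identity (its Euler characteristic is $-2n$, not $0$), so the saddle layers cannot simply be cancelled topologically. What saves the argument is that, after the insertions of $\eta$ and $\varepsilon$, the associative or Khovanov-like structure makes the composite \emph{local}: it ceases to depend on how the bands of $\Sigma$ are embedded and becomes a formal expression in the structure maps of $A$. For an associative $F$ these are the multiplication and comultiplication of the commutative Frobenius structure on $A$, and the contribution of each $U_i$ is a closed spherical piece tubed into and back out of the strand carrying $K_1$, which the unit and counit relations — together with (co)associativity and the Frobenius relation, used to reorganize the merge tree — collapse to the identity. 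For a Khovanov-like $F$ the identical collapse is carried out with the neck-cutting relation and the evaluation of dotted spheres playing the role of the (co)unit relations.

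Concretely one can run the collapse as an induction on $n$: strip off a leaf $U_i$ of the merge tree by deleting the born-and-killed sphere attached to it, reducing to $\bar{C'}\circ C'$ for a ribbon concordance $C'$ with one fewer birth, with base case $n=0$, $C=\mathrm{id}_{K_1}$. Beyond identifying the precise relations hidden inside the definitions of the two classes of theories, the real difficulty — and the step I expect to be the main obstacle — is coherence: for the chain-complex-valued theories in the scope of the theorem, namely Khovanov-Rozansky homologies and conic strong Khovanov-Floer theories, the cobordism maps and the relations among them are well defined only up to sign and chain homotopy, so the induction must be organized to be insensitive to those ambiguities. This homotopy-coherent bookkeeping, rather than anything about the topology of ribbon concordances, is where I expect the work to be, and it is what substantiates the paper's philosophical reading: the feature of a link TQFT responsible for injectivity under ribbon concordances is precisely the presence of this Frobenius or neck-cutting structure.
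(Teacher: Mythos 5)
Your reduction to $F(\bar{C}\circ C)=\mathrm{id}_{F(K_1)}$ and the Morse normal form $C=\Sigma\circ B$ match the paper's starting point, but the crux of your argument contains a genuine gap. You assert that after inserting $\eta^{\otimes n}$ and $\varepsilon^{\otimes n}$ the composite ``ceases to depend on how the bands of $\Sigma$ are embedded and becomes a formal expression in the structure maps of $A$.'' That is precisely the statement that needs proof, and it does not follow from multiplicativity plus associativity or the Khovanov-like condition by any purely algebraic collapse: a multiplicative link TQFT is a functor on \emph{embedded} surfaces in $S^3\times I$, not a $(1+1)$-dimensional TQFT, so its cobordism maps are not determined by the Frobenius algebra $A$. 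The topological difficulty is that the bands $a_i$ of the ribbon concordance may pass through the spanning disks of the unknots $U_j$; your leaf-stripping induction deletes a ``born-and-killed sphere,'' but that sphere's tube can be knotted and linked through the other unlink components, and nothing in your hypotheses lets you ignore that linking. The paper's proof is organized exactly around this point: it re-decomposes $\overline{S_a}\circ S_a$ as a birth of a small unknot, an isotopy of that unknot along $a$, and a merge; it defines the complexity $n(C)=\sum_{i,j}|a_i\cap D_j|$; the base case $n(C)=0$ is where $\bar C\circ C$ is genuinely isotopic to $K\times I$; and the inductive step uses a \emph{weak neck-passing relation} (proved for any multiplicative TQFT by sliding a birth disk across the point at infinity in $S^3$, so it is specific to $S^3$) to show that pushing the small unknot through a neck does not change the induced map. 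Associativity, or the Khovanov-like condition, enters only to guarantee that the relevant tensor factor of $F(S_1)(x)$ is the unit $u$ (in the Khovanov-like case via capping with a death: the resulting closed sphere forces the factor into $\ker\epsilon=\mathbb{F}\cdot u$). None of this is a formal Frobenius computation.

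Two further points. Your Khovanov-like case invokes ``the neck-cutting relation and the evaluation of dotted spheres,'' but the hypothesis only says the unit spans $\ker\epsilon$; neck-cutting is a Bar-Natan--type relation that the paper deliberately avoids (it is what Levine--Zemke use for Khovanov homology, and it is not available for a general Khovanov-like theory). And your expectation that the main work is sign/chain-homotopy coherence is misplaced in this setting: $F$ is assumed to be an honest functor to vector spaces, so functoriality is a hypothesis; the coherence needed to know that Khovanov--Rozansky homologies and conic strong Khovanov--Floer theories yield such functors is quoted from the literature, not reproven. The real content of the theorem is the embedded-topology argument (neck-passing plus the induction on $n(C)$) that your proposal bypasses.
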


The notion of associativity and Khovanov-like-ness, together with
multiplicativity, is so general that they include all conic strong
Khovanov-Floer theories, defined in \cite{saltz2017strong}, which
is based on the definition of Khovanov-Floer theories in \cite{baldwin2019functoriality},
and all Khovanov-Rozansky homologies, which were first defined in
\cite{khovanov2004matrix}. This gives us the following corollaries.
\begin{cor}
\label{MainCor}Let $C$ be a ribbon concordance and $F$ be either
a conic strong Khovanov-Floer theory or Khovanov-Rozansky $\mathfrak{gl}(n)$-homology
for some $n\ge2$. Then $F(C)$ is injective.
\end{cor}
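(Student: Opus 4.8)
The plan is to deduce the corollary directly from Theorem~\ref{MainThm}. All that has to be checked is that a conic strong Khovanov-Floer theory, and Khovanov-Rozansky $\mathfrak{gl}(n)$-homology for $n\ge 2$, are each a multiplicative link TQFT of oriented links in $S^3$ that is, in addition, Khovanov-like in the sense of the previous section; once this is known, the injectivity of $F(C)$ is immediate (as is the existence of the one-sided inverse $F(\bar C)$). Thus the corollary carries no new geometric content, and the real work is the bookkeeping of matching the two families of invariants against the definitions; I expect the Khovanov-Rozansky case to be where that matching requires the most care.

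First I would treat conic strong Khovanov-Floer theories. By definition \cite{saltz2017strong}, building on \cite{baldwin2019functoriality}, such a theory $F$ assigns to a link diagram a filtered complex together with a natural identification of its associated graded object with the Khovanov (or Bar-Natan) complex, and to a link cobordism a filtered map refining the corresponding Bar-Natan cobordism map. Consequently the invariant of a diagram of the unknot carries, compatibly with the filtration, a (possibly deformed) Frobenius algebra structure refining that of $\mathbb{Z}[X]/(X^2)$, with merge and split cobordisms inducing its multiplication and comultiplication, and the behaviour of the Khovanov complex under disjoint unions makes $F$ multiplicative. The sphere evaluations and the neck-cutting relation hold on the associated graded, and the conic hypothesis is precisely what promotes these page-level identities to the structure demanded of a Khovanov-like multiplicative TQFT. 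Hence Theorem~\ref{MainThm} applies and $F(C)$ is injective.

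Next I would treat Khovanov-Rozansky $\mathfrak{gl}(n)$-homology, using its matrix factorization description \cite{khovanov2004matrix} (or, equivalently, its foam description). Here the relevant features are again local: the invariant is assembled from a cube of resolutions whose cobordism maps are built from elementary pieces, the unknot is sent to $H^*(\mathbb{CP}^{n-1})\cong\mathbb{Z}[X]/(X^n)$ with its commutative associative cup-product Frobenius structure, disjoint union is sent to tensor product, and the sphere and neck-cutting relations appear among the defining local relations of the foam (resp.\ matrix factorization) calculus. Functoriality of $\mathfrak{gl}(n)$-homology under oriented link cobordisms, which is needed to give $F(C)$ and $F(\bar C)$ meaning in the first place, is known; combining it with the local relations exhibits $\mathfrak{gl}(n)$-homology as a multiplicative, Khovanov-like link TQFT, and Theorem~\ref{MainThm} again gives the claim.

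The single genuine obstacle is confirming that the axioms of a Khovanov-like theory — the neck-cutting relation in the appropriate (dotted) cobordism category, and the compatibility of merge and split cobordisms with the algebra structure on the unknot invariant — really hold for Khovanov-Rozansky homology and for every conic strong Khovanov-Floer theory, rather than only on an associated graded or $E_\infty$ page. This is, however, contained in or a direct consequence of the cited constructions, so no essentially new argument is required, and the corollary follows formally from Theorem~\ref{MainThm} once these identifications are set up.
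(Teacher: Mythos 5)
Your reduction of the corollary to Theorem~\ref{MainThm} is the right strategy, and your treatment of the conic strong Khovanov-Floer case essentially matches the paper: conicity (via Saltz's Theorem 5.9) gives functoriality of the homology as a link TQFT, multiplicativity comes from the disjoint-union axiom, and Khovanov-likeness holds because the unknot invariant is $\mathbb{F}[X]/(X^{2})$ as a Frobenius algebra, so the unit spans the one-dimensional kernel of the counit. (Note, though, that ``Khovanov-like'' in this paper means only that single condition on the unit and counit; it does not ask for a neck-cutting relation or dotted-cobordism identities, which you import unnecessarily.)

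The genuine gap is in the Khovanov-Rozansky half. You claim that $\mathfrak{gl}(n)$-homology is Khovanov-like, but for $n\ge 3$ this is false: the unknot invariant is $\mathbb{F}[X]/(X^{n})$ with counit $\epsilon$ dual to $X^{n-1}$, so $\ker\epsilon$ is spanned by $1,X,\dots,X^{n-2}$ and has dimension $n-1>1$; the unit $1$ does not span it. Hence the Khovanov-like case of Theorem~\ref{MainThm} simply does not apply, and your argument for $n\ge 3$ breaks at exactly the step you flagged as ``the single genuine obstacle.'' The paper avoids this by invoking the \emph{associative} case of Theorem~\ref{MainThm} instead: Khovanov-Rozansky homology is functorial under oriented link cobordisms in $S^{3}\times I$ (functoriality in $\mathbb{R}^{3}$ from \cite{ehrig2017functoriality}, upgraded to $S^{3}$ in \cite{morrison2019invariants}), and it is multiplicative and associative by construction, since disjoint unions are sent to tensor products in a way compatible with iterated splittings. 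To repair your proof, replace the Khovanov-likeness claim for $\mathfrak{gl}(n)$-homology with a verification of associativity of the disjoint-union isomorphisms, and then cite the associative case of Theorem~\ref{MainThm}.
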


\begin{cor}
\label{MainCor2}Let $K_{1}$ and $K_{2}$ be knots, such that there
exists a ribbon concordance $C$ from $K_{1}$ to $K_{2}$, and $C^{\prime}$
from $K_{2}$ to $K_{1}$. Then for any link TQFT $F$ which is either
a conic strong Khovanov-Floer theory or a Khovanov-Rozansky homology,
we have $F(K_{1})\cong F(K_{2})$.
\end{cor}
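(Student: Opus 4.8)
The plan is to obtain the isomorphism formally from Corollary \ref{MainCor} (equivalently, from Theorem \ref{MainThm}), together with the standard finite-dimensionality of the link homology theories involved. First I would extract the two injections supplied by the hypotheses. Since $C$ is a ribbon concordance from $K_{1}$ to $K_{2}$ and $F$ is of the allowed type, Corollary \ref{MainCor} gives that the map
\[
F(C)\colon F(K_{1})\longrightarrow F(K_{2})
\]
is injective; applying the same corollary to the ribbon concordance $C^{\prime}$ from $K_{2}$ to $K_{1}$ shows that
\[
F(C^{\prime})\colon F(K_{2})\longrightarrow F(K_{1})
\]
is injective as well. Both maps preserve the (multi-)grading: in each of the theories under consideration the grading shift of a cobordism map is controlled by the Euler characteristic of the underlying surface (together, in the decorated settings, with the number of dots), which vanishes for a concordance, so $F(C)$ and $F(C^{\prime})$ are degree-$0$ maps.

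Next I would run the dimension count. Fix a homogeneous degree $j$. The graded pieces $F(K_{1})_{j}$ and $F(K_{2})_{j}$ are finite-dimensional vector spaces over the ground field, and $F(C)$ and $F(C^{\prime})$ restrict to degree-$0$ injections $F(K_{1})_{j}\hookrightarrow F(K_{2})_{j}$ and $F(K_{2})_{j}\hookrightarrow F(K_{1})_{j}$. Hence $\dim F(K_{1})_{j}\le\dim F(K_{2})_{j}\le\dim F(K_{1})_{j}$, so these dimensions agree in every degree. An injective degree-$0$ map between graded vector spaces that are finite-dimensional and of equal dimension in each degree is an isomorphism; hence $F(C)$ itself is an isomorphism, and in particular $F(K_{1})\cong F(K_{2})$ as (multi-)graded vector spaces.

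The one place where something beyond the purely formal content of Theorem \ref{MainThm} enters --- and essentially the only obstacle --- is a finiteness input: injections in both directions do not by themselves force an isomorphism for modules over an arbitrary ring. This causes no trouble for the theories in the statement, since each of them assigns to a knot a vector space that is finite-dimensional in every degree. If one prefers to sidestep dimensions altogether, one can instead use the stronger assertion of Theorem \ref{MainThm}, namely that $F(C)$ and $F(C^{\prime})$ are \emph{split} injective, with explicit left inverses $F(\bar{C})$ and $F(\bar{C^{\prime}})$; a split injection in each direction, together with finite generation, again yields an isomorphism by a cancellation argument. I do not anticipate any genuine difficulty here.
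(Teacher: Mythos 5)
Your proposal is correct and follows the same basic route as the paper: use Theorem \ref{MainThm}/Corollary \ref{MainCor} to get injections $F(C)\colon F(K_{1})\to F(K_{2})$ and $F(C^{\prime})\colon F(K_{2})\to F(K_{1})$, and conclude $F(K_{1})\cong F(K_{2})$. The only difference is in how the last step is justified. The paper's entire proof is the one-line observation that two $\mathbb{F}$-vector spaces admitting linear injections in both directions are isomorphic; this needs no finiteness at all, since the dimension of a vector space is a cardinal and the two injections force equal dimension (Cantor--Schr\"oder--Bernstein for cardinals), hence isomorphism. Your dimension count per homogeneous degree is therefore an unnecessary crutch, and the auxiliary claims it rests on are slightly shakier than the rest of your argument: a general conic strong Khovanov--Floer theory need not carry a $\mathbb{Z}$-grading (the paper only later restricts to ``nicely graded'' theories), so the assertion that $F(C)$ is a degree-$0$ map of graded vector spaces is not available in that generality, and it is also more than the statement asks for. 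Your fallback via split injectivity (left inverses $F(\bar{C})$, $F(\overline{C^{\prime}})$) is fine but likewise not needed. In short: the argument goes through, but the cleanest version is exactly the paper's, and you could drop the grading and finiteness discussion entirely.
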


Furthermore, we will observe that if $F$ is actually a $\mathbb{Z}$-graded
theory, and satisfies some nice properties, then our proof of injectivity
simplifies even more. 

As a topological application of our arguments, we will give a very
simple alternative proof of Zemke's result on knot Floer homology.
Then we will also give another proof of Lidman-Vela-Vick-Wong's result
on $\widehat{HF}\circ\Sigma$ and prove the following theorem, regarding
the deck transformation action, denoted as $\tau$, and the involution
introduced in \cite{hendricks2017involutive}, denoted as $\iota$,
on the hat-flavor Heegaard Floer homology of branched double covers.
\begin{thm}
\label{MainApp}Suppose that a knot $K_{0}$ is ribbon concordant
to $K_{1}$. Then the following statements hold.
\begin{itemize}
\item If $K_{1}$ is an odd torus knot, then the $\tau$-action on $\widehat{HF}(\Sigma(K_{0}))$
are trivial.
\item If $K_{1}$ is a Montesinos knot, then the $\tau$-action and the
$\iota$-action on $\widehat{HF}(\Sigma(K_{0}))$ coincide, and there
exists a $\tau$-invariant basis of $\widehat{HF}(\Sigma(K_{0}))$
with only one fixed basis element.
\end{itemize}
\end{thm}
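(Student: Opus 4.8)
The plan is to transport all the relevant structure from $K_{1}$ to $K_{0}$ through the branched double cover of the ribbon concordance. Write $C$ for a ribbon concordance from $K_{0}$ to $K_{1}$, so that $\Sigma(C)$ is a smooth cobordism from $\Sigma(K_{0})$ to $\Sigma(K_{1})$. By the proof of Lidman--Vela-Vick--Wong's theorem indicated above---the ribbon argument applied to $\widehat{HF}\circ\Sigma$---the map $\hat{F}_{\Sigma(C)}$ is injective with left inverse $\hat{F}_{\Sigma(\bar{C})}$. The cobordism $\Sigma(C)$ carries a deck involution restricting on its two ends to the deck transformations $\tau$ of $\Sigma(K_{0})$ and $\Sigma(K_{1})$, so naturality of $\widehat{HF}$ under diffeomorphisms of cobordisms gives $\hat{F}_{\Sigma(C)}\circ\tau_{K_{0}}=\tau_{K_{1}}\circ\hat{F}_{\Sigma(C)}$, and likewise for $\hat{F}_{\Sigma(\bar{C})}$; the same commutation holds with $\tau$ replaced by the involution $\iota$, since cobordism maps intertwine the involutive structures on their ends up to homotopy. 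Hence the idempotent $\hat{F}_{\Sigma(C)}\circ\hat{F}_{\Sigma(\bar{C})}$ commutes with both $\tau$ and $\iota$, so
\[
\widehat{HF}(\Sigma(K_{1}))=\mathrm{im}\,\hat{F}_{\Sigma(C)}\ \oplus\ \ker\hat{F}_{\Sigma(\bar{C})}
\]
is a direct sum decomposition both of $\mathbb{F}_{2}[\tau]$-modules and of $\mathbb{F}_{2}[\iota]$-modules, whose first summand is carried isomorphically by $\hat{F}_{\Sigma(C)}$ onto $\widehat{HF}(\Sigma(K_{0}))$, together with its actions $\tau_{K_{0}}$ and $\iota_{K_{0}}$.

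For the first bullet I would feed in the known fact that the deck transformation acts trivially on the Heegaard Floer homology of the Brieskorn sphere $\Sigma(2,p,q)=\Sigma(T(p,q))$ for an odd torus knot $T(p,q)$. Then $\tau_{K_{1}}=\mathrm{id}$, so $\hat{F}_{\Sigma(C)}\circ\tau_{K_{0}}=\tau_{K_{1}}\circ\hat{F}_{\Sigma(C)}=\hat{F}_{\Sigma(C)}$, and injectivity of $\hat{F}_{\Sigma(C)}$ forces $\tau_{K_{0}}=\mathrm{id}$.

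For the second bullet I would use that the branched double cover of a Montesinos knot is Seifert fibered, together with the known computation that for such covers $\tau_{K_{1}}=\iota_{K_{1}}$ and that this involution admits an invariant basis with exactly one fixed basis element. The identity $\tau_{K_{0}}=\iota_{K_{0}}$ follows exactly as before, from $\hat{F}_{\Sigma(C)}\circ\tau_{K_{0}}=\tau_{K_{1}}\circ\hat{F}_{\Sigma(C)}=\iota_{K_{1}}\circ\hat{F}_{\Sigma(C)}=\hat{F}_{\Sigma(C)}\circ\iota_{K_{0}}$ and injectivity. For the basis statement, recall that any involution $\sigma$ of a finite-dimensional $\mathbb{F}_{2}$-vector space $V$ splits $V$ as a sum of $a$ copies of the trivial and $b$ copies of the free $\mathbb{F}_{2}[\mathbb{Z}/2]$-module; the integer $a=2\dim V^{\sigma}-\dim V\ge 0$ is exactly the number of fixed basis vectors in any $\sigma$-invariant permutation basis, and it is additive over direct sums. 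Applied to the decomposition above with $V_{0}=\widehat{HF}(\Sigma(K_{0}))$, this yields $a_{0}\le a_{1}=1$; on the other hand $\dim_{\mathbb{F}_{2}}\widehat{HF}(\Sigma(K_{0}))\equiv|H_{1}(\Sigma(K_{0}))|=\det(K_{0})\equiv 1\pmod 2$, so $a_{0}$ is odd, hence $a_{0}=1$, which is the assertion.

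The step I expect to be the main obstacle is the equivariance claimed in the first paragraph. One must verify that the ribbon-concordance argument does apply to $\widehat{HF}\circ\Sigma$---which is not literally one of the multiplicative link TQFTs of \autoref{MainThm}, so one invokes the separate treatment of $\widehat{HF}\circ\Sigma$ promised above---and, more delicately, that the resulting retraction is equivariant not merely for the deck transformation $\tau$ but also for the involutive structure $\iota$; that is, that the cobordism map of $\Sigma(C)$ commutes with $\iota$ up to chain homotopy, so that the displayed splitting is genuinely a splitting of $\mathbb{F}_{2}[\iota]$-modules. Granting this, the inputs about $\tau$ and $\iota$ for Brieskorn spheres and for Seifert fibered branched double covers are citable, and the $\mathbb{F}_{2}[\mathbb{Z}/2]$-module bookkeeping is elementary.
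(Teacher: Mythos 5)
Your proposal is correct and follows the same basic strategy as the paper: use the ribbon-concordance retraction $\hat{F}_{\Sigma(\bar{C})}\circ\hat{F}_{\Sigma(C)}=\mathrm{id}$ together with equivariance of the cobordism maps under $\tau$ and $\iota$ (which, as you note, is the one input that must be quoted; the paper cites Alfieri--Kang--Stipsicz and Hendricks--Manolescu for exactly this), so that $\widehat{HF}(\Sigma(K_{0}))$ becomes an equivariant direct summand of $\widehat{HF}(\Sigma(K_{1}))$, and then compare $\mathbb{F}[\mathbb{Z}_{2}]$-module decompositions. Two small differences are worth recording. First, for the statements ``$\tau_{K_{0}}$ trivial'' and ``$\tau_{K_{0}}=\iota_{K_{0}}$'' you argue directly from injectivity plus equivariance ($\hat{F}_{\Sigma(C)}\circ\tau_{K_{0}}=\tau_{K_{1}}\circ\hat{F}_{\Sigma(C)}$, etc.), which is slightly leaner than the paper's route of counting free summands of the $\mathbb{F}[\mathbb{Z}_{2}]$-module structure for $\sigma=\tau$ and $\sigma=\tau\circ\iota$; both are fine. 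Second, for the ``exactly one fixed basis element'' claim the summand comparison only gives $a_{0}\le a_{1}=1$ (in the paper's notation, $m_{M}\le m_{N}$), and the paper stops there with ``this proves the theorem''; your additional parity observation, that $\dim_{\mathbb{F}}\widehat{HF}(\Sigma(K_{0}))\equiv|H_{1}(\Sigma(K_{0}))|=\det(K_{0})\equiv1\pmod 2$ forces $a_{0}$ to be odd and hence equal to $1$, is precisely the step needed to rule out $a_{0}=0$ and makes explicit what the paper leaves implicit. So your write-up is, if anything, marginally more complete than the published argument, provided the cited equivariance of $\hat{F}_{\Sigma(C)}$ with respect to $\iota$ (and the $\tau$--$\iota$ computations for odd torus knots and Montesinos knots) are taken as known, exactly as the paper does.
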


\subsection*{Acknowledgement}

The author would like to specially thank Marco Marengon for suggesting
an idea for the proof of the nicely graded case. The author is also
grateful to Abhishek Mallick for helpful discussions, and Monica Jinwoo
Kang, Andras Juhasz and Robert Lipshitz for numerous helpful comments
on this paper. Finally, the author would also like to thank University
of Oregon and UCLA for their hospitality.

\section{Multiplicative link TQFTs and conic strong Khovanov-Floer theories}

\subsection{Multiplicativity of a TQFT of links in $S^{3}$}

Recall that a (vector space valued) TQFT $F$ of (oriented) links
in $S^{3}$ is a functor 
\[
F:\mathbf{Link}_{S^{3}}^{+}\rightarrow\mathbf{Vect}_{\mathbb{F}}.
\]
 Here, $\mathbf{Link}_{S^{3}}^{+}$ is the category whose objects
are oriented links in $S^{3}$ and morphisms are oriented link cobordisms,
and $\mathbf{Vect}_{\mathbb{F}}$ is the category of vector spaces
over a fixed coefficient field $\mathbb{F}$.

Suppose that a link $L$ can be written as a disjoint union $L=L_{1}\sqcup L_{2}$,
i.e. there exists a genus zero Heegaard splitting $S^{3}=D_{1}\cup_{S^{2}}D_{2}$
such that $L\cap S^{2}=\emptyset$, $L\cap D_{1}=L_{1}$, and $L\cap D_{2}=L_{2}$.
Then we usually expect $F(L)$ to split along the disjoint union as
follows:
\[
F(L)\simeq F(L_{1})\otimes_{\mathbb{F}}F(L_{2}).
\]
 But the multiplicativity that we want $F$ to satisfy is stronger
than having such an isomorphism. Suppose that we are given link cobordisms
\begin{align*}
S_{1} & :L_{1}\rightarrow L_{1}^{\prime},\\
S_{2} & :L_{2}\rightarrow L_{2}^{\prime},
\end{align*}
 and consider the links $L=L_{1}\sqcup L_{2}$ and $L^{\prime}=L_{1}^{\prime}\sqcup L_{2}^{\prime}$.
When there exist disjoint open balls $V_{1},V_{2}\subset S^{3}$,
satisfying $L_{1}\subset V_{1}$ and $L_{2}\subset V_{2}$, such that
$S_{1}\subset V_{1}\times I$ and $S_{2}\subset V_{2}\times I$, we
can form the disjoint union cobordism $S=S_{1}\sqcup S_{2}$. The
cobordism $S$ is then a link cobordism from $L$ to $L^{\prime}$.

Now we have three linear maps:
\begin{align*}
F(S_{1}) & :F(L_{1})\rightarrow F(L_{1}^{\prime}),\\
F(S_{2}) & :F(L_{2})\rightarrow F(L_{2}^{\prime}),\\
F(S) & :F(L)\rightarrow F(L^{\prime}).
\end{align*}
 Using these maps, we define the multiplicativity of $F$ as follows.
\begin{defn}
A TQFT of (oriented) links in $S^{3}$ is \textit{multiplicative}
if we have identifications 
\begin{align*}
F(L) & \simeq F(L_{1})\otimes F(L_{2}),\\
F(L^{\prime}) & \simeq F(L_{1}^{\prime})\otimes F(L_{2}^{\prime}),
\end{align*}
 such that $F(S)=F(S_{1})\otimes F(S_{2})$ is satisfied. 
\end{defn}

Unfortunately, multiplicativity is not enough to prove that all ribbon
concordance induce injective maps, so we need to introduce some additional
conditions on multiplicative link TQFTs.
\begin{defn}
A multiplicative TQFT $F$ of oriented links in $S^{3}$ is \textit{associative}
if for any link $L=L_{1}\sqcup L_{2}$ such that $L_{1},L_{2}$ are
contained in disjoint open balls $V_{1},V_{2}$ respectively, we have
an associated isomorphism 
\[
F(L)\xrightarrow{\cong}F(L_{1})\otimes F(L_{2})
\]
 which depends only on the choice of open balls $V_{1}$ and $V_{2}$,
and if we are given a link $L=L_{1}\sqcup L_{2}\sqcup L_{3}$, the
following diagram commutes.
\[
\xymatrix{F(L)\ar[r]^{\cong}\ar[d]^{\cong} & F(L_{1}\sqcup L_{2})\otimes F(L_{3})\ar[d]^{\cong}\\
F(L_{1})\otimes F(L_{2}\sqcup L_{3})\ar[r]^{\cong} & F(L_{1})\otimes F(L_{2})\otimes F(L_{3})
}
\]
\end{defn}

As we will see in the next section, associativity is enough to prove
that ribbon concordance maps are injective. However, even when we
are given with a link TQFT which is multiplicative but not associative,
we are still able to find another condition which is sufficient for
our goal. 

Recall that, if $F$ is a TQFT of (oriented) links in $S^{3}$, then
the $\mathbb{F}$-vector space $F(\text{unknot})$ comes with the
following operations:
\begin{align*}
\text{birth map }b: & \mathbb{F}\rightarrow F(\text{unknot}),\\
\text{death }\epsilon: & F(\text{unknot})\rightarrow\mathbb{F}.
\end{align*}
 Also, we call the element $b(1)\in F(\text{unknot})$ as the unit
and denote it as $u$. Note that, if $F$ is the Khovanov homology
functor $\mathbf{Kh}$, then $\epsilon(u)=0$ and $u$ spans the kernel
of $\epsilon$.
\begin{defn}
A multiplicative TQFT $F$ of oriented links in $S^{3}$ is \textit{Khovanov-like}
if the unit $u\in F(\text{unknot})$ spans the kernel of the counit
$\epsilon$.
\end{defn}

\subsection{Khovanov-Floer theories}

The notion of Khovanov-Floer theory first appeared in \cite{baldwin2019functoriality}.
In that paper, Baldwin, Hedden, and Lobb gave its definition as follows.
\begin{defn}
Let $V$ be a graded vector space. a $V$-complex is a pair $(C,q)$
where $C$ is a filtered chain complex and $q:V\rightarrow E_{2}(C)$
is an isomorphism. A map of $V$-complexes is a filtered chain map.
When a map $f$ of $V$-complexes induces the identity map between
the $E_{2}$ pages, we say that $f$ is a quasi-isomorphism.
\end{defn}

\begin{defn}
A Khovanov-Floer theory $\mathcal{A}$ is a rule which assigns to
every link diagram $D$ a quasi-isomorphism class $\mathcal{A}(D)$
of $\mathbf{Kh}(D)$-complexes which satisfies the following conditions.
\begin{itemize}
\item If $D^{\prime}$ is planar isotopic to $D$, then there is a morphism
$\mathcal{A}(D)\rightarrow\mathcal{A}(D^{\prime})$ which induces
the isotopy map $\mathbf{Kh}(D)\xrightarrow{\sim}\mathbf{Kh}(D^{\prime})$
on the $E_{2}$ page.
\item If $D^{\prime}$ is obtained from $D$ by a diagrammatic 1-handle
attachment, then there is a morphism $\mathcal{A}(D)\rightarrow\mathcal{A}(D^{\prime})$
which induces the cobordism map $\mathbf{Kh}(D)\rightarrow\mathbf{Kh}(D^{\prime})$
on the $E_{2}$ page.
\item For any diagrams $D,D^{\prime}$, we have a morphism $\mathcal{A}(D\sqcup D^{\prime})\rightarrow\mathcal{A}(D)\otimes\mathcal{A}(D^{\prime})$
which induces the standard isomorphism $\mathbf{Kh}(D\sqcup D^{\prime})\xrightarrow{\sim}\mathbf{Kh}(D)\otimes\mathbf{Kh}(D^{\prime})$
on the $E_{2}$ page.
\item If $D$ is a diagram of an unlink, then the spectral sequence $E_{2}(\mathcal{A}(D))\Rightarrow E_{\infty}(\mathcal{A}(D))$
degenerates on the $E_{2}$ page.
\end{itemize}
\end{defn}

Later, Saltz gave a definition of strong Khovanov-Floer theories in
the following way.
\begin{defn}
A strong Khovanov-Floer theory $\mathcal{K}$ is a rule which assigns
a link diagram $D$ and a collection of auxiliary data $A$ a filtered
chain complex $\mathcal{K}(D,A)$ satisfying the following conditions.
\begin{itemize}
\item For any two collections $A_{\alpha},A_{\beta}$ of auxiliary data,
there is a homotopy equivalence $a_{\alpha}^{\beta}:\mathcal{K}(D,A_{\alpha})\rightarrow\mathcal{K}(D,A_{\beta})$.
We write $\mathcal{K}(D)$ for the canonical representative of the
transitive system $\{\mathcal{K}(D,A_{\alpha}),a_{\alpha}^{\beta}\}$,
i.e. the limit of the diagram $\{\mathcal{K}(D,A_{\alpha}),a_{\alpha}^{\beta}\}$
in the homotopy category of chain complexes.
\item If $D$ is a crossingless diagram of the unknot, then $H_{\ast}(\mathcal{K}(D))\simeq\mathbf{Kh}(D)$.
\item For diagrams $D,D^{\prime}$, we have $\mathcal{K}(D\sqcup D^{\prime})\simeq\mathcal{K}(D)\otimes K(D^{\prime})$.
\end{itemize}
Furthermore, a strong Khovanov-Floer theory also assigns maps to diagrammatic
cobordisms with auxiliary data. Those maps should satisfy the following
conditions.
\begin{itemize}
\item If $D^{\prime}$ is obtained from $D$ by a diagrammatic handle attachment,
then there is a function 
\[
\phi:\{\text{auxiliary data for }D\}\rightarrow\{\text{auxiliary data for }D^{\prime}\}
\]
 and a map 
\[
\mathfrak{h}_{A_{\alpha},\phi(A_{\alpha}),B}:\mathcal{K}(D,A_{\alpha})\rightarrow\mathcal{K}(D^{\prime},\phi(A_{\alpha})
\]
 where $B$ is some additional auxiliary data. In addition, if the
domain of $\phi$ is empty, then its codomain is also empty. This
gives a well-defined map 
\[
\mathfrak{h}_{B}:\mathcal{K}(D)\rightarrow\mathcal{K}(D^{\prime})
\]
 for a fixed $B$. Furthermore, for any two sets $B,B^{\prime}$ of
additional auxiliary data, we have $\mathfrak{h}_{B}\simeq\mathfrak{h}_{B^{\prime}}$.
\item If $D$ is a crossingless diagram of the unknot, then $\mathcal{K}(D)$
is isomorphic to $\mathbf{Kh}(D)$ as Frobenius algebras.
\item If $D^{\prime}$ is obtained from $D$ by a planar isotopy, then $\mathcal{K}(D)\simeq\mathcal{K}(D^{\prime})$.
\item Let $D=D_{0}\sqcup D_{1}$, $D^{\prime}=D_{0}^{\prime}\sqcup D_{1}^{\prime}$,
and suppose that $\Sigma_{0},\Sigma_{1}$ are diagrammatic cobordisms
from $D_{0}$ to $D_{0}^{\prime}$ and $D_{1}$ to $D_{1}^{\prime}$,
respectively. Take the disjoint union $\Sigma=\Sigma_{0}\sqcup\Sigma_{1}$.
Then we have 
\[
\mathcal{K}(\Sigma)=\mathcal{K}(\Sigma_{0})\otimes\mathcal{K}(\Sigma_{1}).
\]
\item The handle attachment maps are invariant under swapping the order
of handle attachments with disjoint supports, and satisfies movie
move 15, as shown in Figure 2 of \cite{saltz2017strong}.
\end{itemize}
\end{defn}

Unfortunately, for a strong Khovanov-Floer theory to induce a TQFT
of links in $S^{3}$, we need one more condition.
\begin{defn}
A strong Khovanov-Floer theory $\mathcal{K}$ is \textit{conic} if
for any link diagram $D$ and any crossing $c$ of $D$, we have 
\[
\mathcal{K}(D)\simeq\text{Cone}(\mathcal{K}(D_{0})\xrightarrow{\mathfrak{h}_{\gamma_{c}}}\mathcal{K}(D_{1})),
\]
 where $D_{0}$ and $D_{1}$ are the 0-resolution and the 1-resolution
of $D$ at $c$ and $\mathfrak{h}_{\gamma_{c}}$ is the diagrammatic
handle attachment map at $c$.
\end{defn}

The notion of conic strong Khovanov-Floer theories is very general.
The following list of link homology theories are examples of conic
strong Khovanov-Floer theories. (Actually, all strong Khovanov-Floer
theories known up to now are conic!) 
\begin{itemize}
\item Khovanov homology of $L$\cite{khovanov1999categorification};
\item Heegaard Floer homology of $\Sigma(\text{unknot}\sqcup L)$\cite{ozsvath2005heegaard};
\item Unreduced singular instanton homology of $L$\cite{kronheimer2011khovanov};
\item Bar-Natan homology of $L$\cite{rasmussen2010khovanov};
\item Szabo homology of $L$\cite{szabo2010geometric}.
\end{itemize}
When $\mathcal{K}$ is a conic strong Khovanov-Floer theory, its homology
$K=H_{\ast}(\mathcal{K})$ is functorial under link cobordisms in
$S^{3}\times I$, and thus a Khovanov-like multiplivative TQFT, by
Theorem 5.9 of \cite{saltz2017strong}. Hence we see that our conditions
on link TQFTs are general enough to cover all strong Khovanov-Floer
theories. Actually, even more is true: all strong Khovanov-Floer theories
known up to now are associative. But it is not clear whether the same
should also be true for all strong theories.

In this paper, we will confuse Khovanov-Floer theories with their
homology, so that when we say that $F$ is a conic strong Khovanov-Floer
theory, we will actually mean that $F$ is the multiplicative link
TQFT which arises as the homology of a conic strong Khovanov-Floer
theory.

\section{Proof of Theorem \ref{MainThm}}

\subsection{\label{subsec-saddle}An alternative decomposition of a saddle followed
by the dual saddle}

Let a link $L$ and a framed simple arc $a$ inside $S^{3}$, where
the interior of $a$ is disjoint from $L$ and $\partial a\subset L$,
are given. Then we can perform a saddle move along $a$to $L$. In
terms of cobordisms in $S^{3}\times I$, this corresponds to attaching
a 1-handle; denote the saddle cobordism as $S_{a}$. Then its upside-down
cobordism $\overline{S_{a}}$ can be considered as performing a ``dual
saddle'' move, which is a saddle move along a dual arc $a^{\ast}$,
as drawn in the right of Figure \ref{Fig7}.

\begin{figure}
\resizebox{.5\textwidth}{!}{\includegraphics{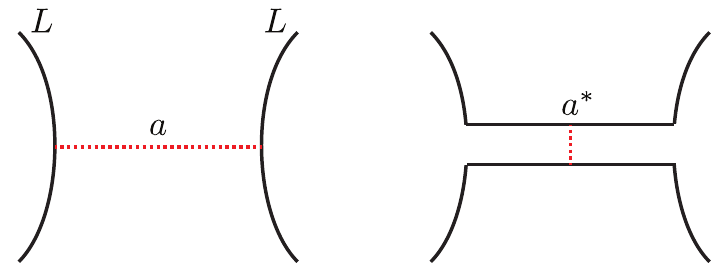}}
\caption{\label{Fig7}The framed arc $a$ and the dual arc $a^{\ast}$.}
\end{figure}

\begin{figure}
\resizebox{.5\textwidth}{!}{\includegraphics{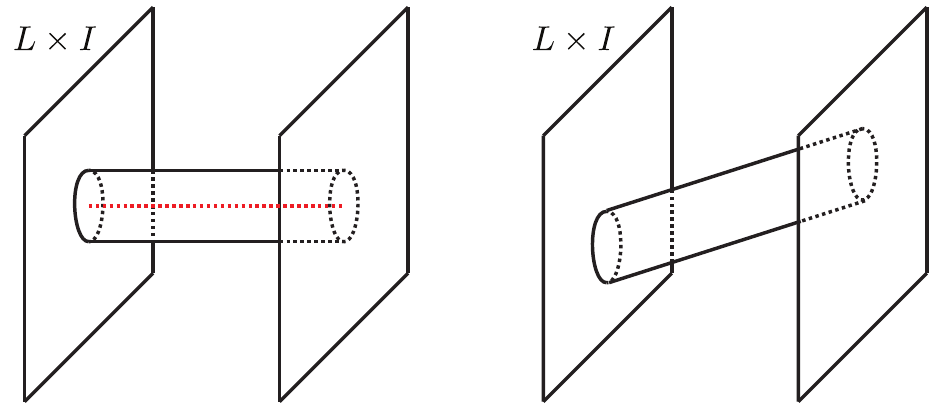}}
\caption{\label{Fig8}The surface $\overline{S_{a}}\circ S_{a}$ and its slight
perturbation along the cylinder part}
\end{figure}

The composition $\overline{S_{a}}\circ S_{a}$ is then, topologically,
a ``cylinder'' attached to $L\times I$, as shown in the left side
of Figure \ref{Fig8}. Now consider perturbing the cylinder part of
our cobordism $\overline{S_{a}}\circ S_{a}$, so that one end of the
cylinder part lies ``below'' the other end. That gives another decomposition
of $\overline{S}_{a}\circ S_{a}$, as follows:
\begin{itemize}
\item Saddle move from $L$ to $L\sqcup U$, where the unknot component
$U$ is created at one end of the arc $a$.
\item Isotopy of the component $U$, along the arc $a$. This moves $U$
to the other end of $a$.
\item Saddle move from $L\sqcup U$ to $L$.
\end{itemize}
Note that, in terms of movies of links, one can write the above decomposition
as drawn in the right of Figure \ref{Fig9}.

\begin{figure}
\resizebox{.5\textwidth}{!}{\includegraphics{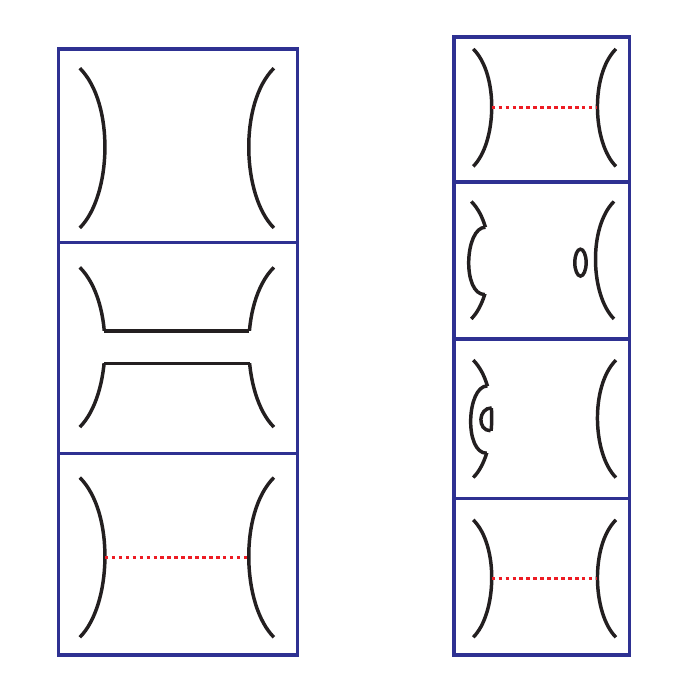}}
\caption{\label{Fig9}The movie on the left represents the saddle along $a$
followed by the saddle along $a^{\ast}$. The movie on the right represents
our new decomposition of $\overline{S_{a}}\circ S_{a}$. Here, $U$
is the unknot component appearing in the middle of the right movie
that is isotoped along $a$.}
\end{figure}

\subsection{Weak neck-passing relation}

Consider the 2-component unlink $U_{2}$. Then we can consider an
isotopy $\phi=\{\phi_{t}\}$ from $U_{2}=A\sqcup B$ to itself, defined
by moving one of its components, say $A$, around the other component
$B$, as shown in Figure \ref{Fig2}. This gives a link cobordism
$S_{\phi}$ from $U_{2}$ to itself, as follows:
\[
S_{\phi}=\bigcup_{t\in[0,1]}\phi_{t}(U_{2})\times\{t\}\subset S^{3}\times[0,1].
\]
 So, given any link TQFT $F$, we have a map $F(S_{\phi})\in\text{Aut}(F(U_{2}))$.
We consider the following relation for multiplicative link TQFTs:
\begin{description}
\item [{Weak$\ $neck-passing$\ $relation}] Let $F(U_{2})\simeq F(A)\otimes F(B)$
be the isomorphism given by the multiplicativity of $F$. Then for
any element $a\in F(U_{2})$ of the form $a=x\otimes u$, where $u$
is the unit in the Frobenius algebra $F(B)$, we have $F(S_{\phi})(a)=a$.
\end{description}
\begin{figure}
\resizebox{.3\textwidth}{!}{\includegraphics{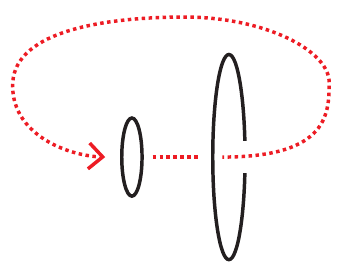}}
\caption{\label{Fig2}The ``go-around'' isotopy $\phi$ from $U_{2}$ to
itself.}
\end{figure}

We now prove that any multiplicative TQFT $F$ of (oriented) links
in $S^{3}$ satisfies the weak neck-passing relation. Consider the
birth $B_{1}$ of the component $B$, as shown in Figure \ref{Fig1}.
Then $S_{\phi}\circ B_{1}$ is isotopic to $B_{2}$. But since we
are working with links in $S^{3}$, not $\mathbb{R}^{3}$, we know
that $B_{1}$ and $B_{2}$ are isotopic by isotoping $B_{1}$ across
the point at infinity. So we have 
\[
F(B_{1})=F(B_{2})=(\text{birth map on }B\text{ component}).
\]
 Hence we get 
\[
F(S_{\phi})(x\otimes u)=F(S_{\phi})(F(B_{1})(x))=F(S_{\phi}\circ B_{1})(x)=F(B_{2})(x)=x\otimes u.
\]
 Therefore the weak neck-passing relation holds for $F$.

\begin{figure}
\resizebox{.3\textwidth}{!}{\includegraphics{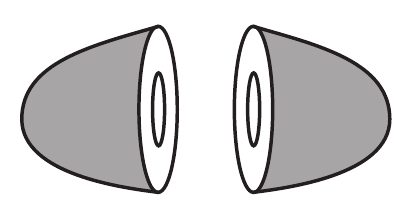}}
\caption{\label{Fig1}Two ``birth cobordisms'' $B_{1}$ and $B_{2}$ of a
component in $U_{2}$.}
\end{figure}

\subsection{Unknotting a ribbon concordance}

Let $C$ be a ribbon concordance from a knot $K\subset S^{3}$ and
$F$ be a multiplicative TQFT of (oriented) links in $S^{3}$. Then
$C$ can be decomposed as $n$ births of new unknot components $U_{1},\cdots,U_{n}$
followed by saddles along framed arcs $a_{i}$ which connect $K$
with $U_{i}$. Then $\bar{C}\circ C$ is a composition of the following
four types of cobordisms:
\begin{itemize}
\item Births of $U_{1},\cdots,U_{n}$;
\item Saddles along $a_{1},\cdots,a_{n}$;
\item Saddles along the dual arcs $b_{1},\cdots,b_{n}$, where $b_{i}$
is dual to $a_{i}$;
\item Deaths of $U_{1},\cdots,U_{n}$.
\end{itemize}
But we can see that $\bar{C}\circ C$ also admits another decomposition
into elementary cobordisms, using the observations we made in subsection
\ref{subsec-saddle}. In particular, it can be realized as follows
(as in Figure \ref{Fig3}):
\begin{itemize}
\item Births of $U_{1},\cdots,U_{n}$;
\item Saddles along arcs $e_{1},\cdots,e_{n}$, where the endpoints of $e_{i}$
are given by the two points in $\partial(\nu(K\cap a_{i}))\cap K$;
\begin{itemize}
\item Note that this move creates a new set $U_{1}^{\prime}\cdots,U_{n}^{\prime}$
of unknot components.
\end{itemize}
\item Isotopy of each $U_{i}^{\prime}$ along the framed arc $a_{i}$;
\item Saddles between each pair $U_{i}$ and $U_{i}^{\prime}$, so that
they merge into one unknot $U_{i}$;
\item Deaths of $U_{1},\cdots,U_{n}$.
\end{itemize}
\begin{figure}
\resizebox{.5\textwidth}{!}{\includegraphics{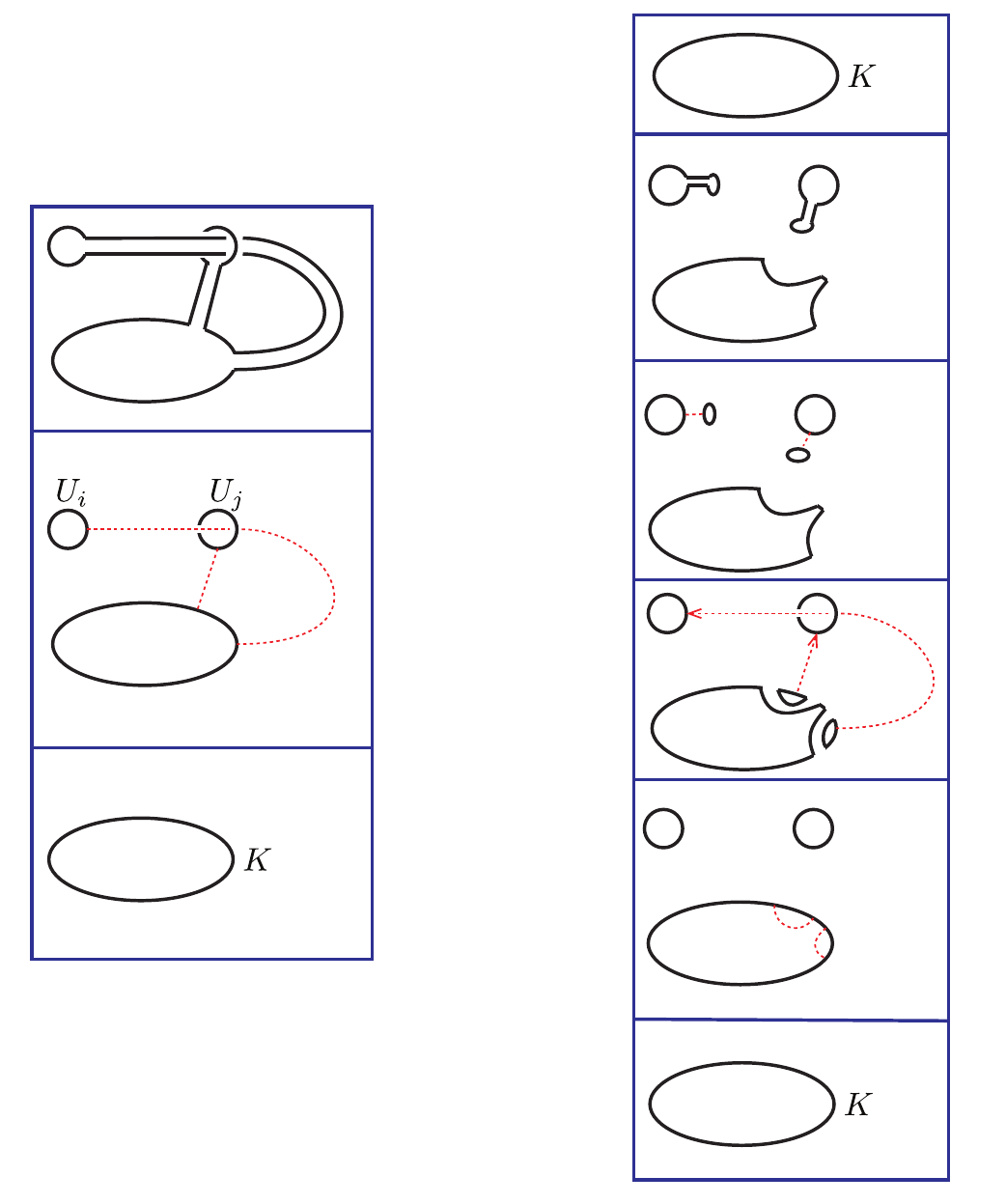}}
\caption{\label{Fig3}The movie on the left represents $C$, and the movie
on the right represents $\bar{C}\circ C$. Here, the dotted red lines
denote the framed arcs $a_{i}$, and the dotted red arrows denote
the path along which we isotope the newly created unknot components
$U_{i}^{\prime}$.}
\end{figure}

Choose a set of pairwise disjoint disks $\{D_{1},\cdots,D_{n}\}$,
each of which is disjoint from $K$, such that $\partial D_{i}=U_{i}$
for each $i$. Then we can consider the number $n(C)$, defined as
follows:
\[
n(C)=\sum_{i,j}|a_{i}\cap D_{j}|,
\]
 assuming that all intersections between arcs $a_{i}$ and disks $D_{j}$
are transverse. From now on, we will apply an induction on $n(C)$
to prove Theorem \ref{MainThm}; note that $n(C)$ only depends on
$C$ and the choice of $U_{1},\cdots,U_{n}$ and $D_{1},\cdots,D_{n}$,
and is always a nonnegative integer.

\subsubsection{The base case}

We first consider the base case, which is the case when $n(C)=0$.
Consider the sub-cobordism $S$ of $\bar{C}\circ C$, defined as the
composition of the following elementary cobordisms:
\begin{itemize}
\item Saddles along $e_{1},\cdots,e_{n}$, so that a new set $U_{1}^{\prime}\cdots,U_{n}^{\prime}$
of unknot components is created;
\item Isotopies of each $U_{i}^{\prime}$ along the framed arc $a_{i}$.
\end{itemize}
Also, consider the cobordism $S_{0}$ from an unknot $U$ to the empty
link, defined as the composition of the following elementary cobordisms:
\begin{itemize}
\item Birth of a new unknot component $U^{\prime}$;
\item Saddle between $U$ and $U^{\prime}$, so that they merge into an
unknot $U$;
\item Death of $U$.
\end{itemize}
A figure depicting the cobordisms $S$ and $S_{0}$ is drawn in Figure
\ref{Fig5}.

\begin{figure}
\resizebox{.5\textwidth}{!}{\includegraphics{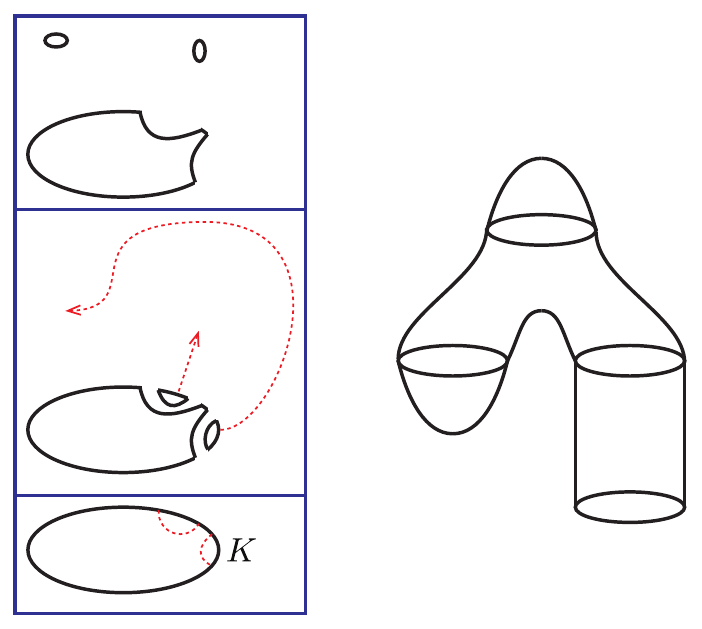}}
\caption{\label{Fig5}A movie for the cobordism $S$(left) and a figure representing
the cobordism $S_{0}$(right). Again, the dotted red lines denote
the framed arcs $a_{i}$, and the dotted red arrows denote the path
along which we isotope the newly created unknot components $U_{i}^{\prime}$. }
\end{figure}

Then, by assumption, the arcs $a_{i}^{\prime}$ never pass through
the disks $D_{j}$, so we have an isotopy 
\[
\bar{C}\circ C\sim((K\times I)\sqcup S_{0}\sqcup\cdots\sqcup S_{0})\circ S.
\]
 But $S_{0}$ is isotopic to the death cobordism $D$, so we get an
isotopy 
\[
\bar{C}\circ C\sim((K\times I)\sqcup D\sqcup\cdots\sqcup D)\circ S.
\]
 Now the cobordism $((K\times I)\sqcup D\sqcup\cdots\sqcup D)\circ S$
is isotopic to the cylinder $K\times I$. Thus we get 
\[
\bar{C}\circ C\sim K\times I.
\]
 Therefore we have 
\[
F(\bar{C})\circ F(C)=F(\bar{C}\circ C)=F(K\times I)=\text{id}.
\]
 This proves the base case of Theorem \ref{MainThm}.

\subsubsection{Inductive step, when $F$ is associative}

Now suppose that $n(C)>0$. Then we can isotope $\bar{C}\circ C$
so that the map 
\[
T:\bigcup_{i,j}(a_{i}\cap D_{j})\hookrightarrow\bar{C}\circ C\hookrightarrow S^{3}\times I\twoheadrightarrow I
\]
 is injective, i.e. all intersection points $a_{i}\cap D_{j}$ occur
in ``distinct times''. Choose a point $p\in a_{i}\cap D_{j}$ at
which the function $T$ takes its minimum, and construct another ribbon
concordance $C^{\prime}$, as shown in Figure \ref{Fig4}, using the
same saddle-arcs $a_{k}$ for all $k\ne i$ but replacing $a_{i}$
by a new framed arc $a_{i}^{\prime}$. Here, $a_{i}^{\prime}$ should
satisfy the following conditions.
\begin{itemize}
\item $a_{i}\cap a_{i}^{\prime}=\emptyset$.
\item $a_{i}\cup a_{i}^{\prime}$ is isotopic to a $0$-framed meridian
of $U_{j}$ which intersects once with $D_{j}$ but does not intersect
with any other $D_{k}$ nor the knot $K$. 
\end{itemize}
\begin{figure}
\resizebox{.5\textwidth}{!}{\includegraphics{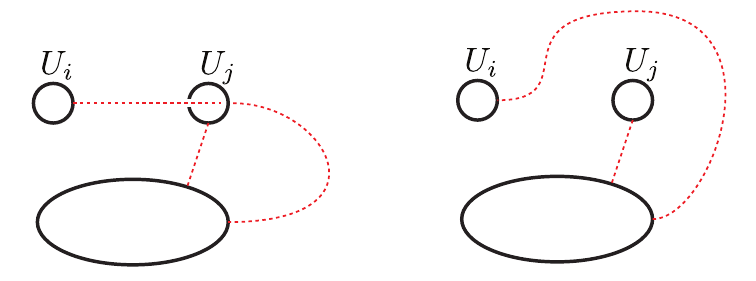}}
\caption{\label{Fig4}The given ribbon concordance $C$(left) and the new ribbon
concordance $C^{\prime}$(right). Again, dotted red lines are the
framed arcs along which we perform saddle moves. }
\end{figure}

Then the concordances $\overline{C^{\prime}}\circ C^{\prime}$ and
$\bar{C}\circ C$ differ in the following way. In the movie of $\bar{C}\circ C$
drawn in Figure \ref{Fig3}, denote the composition of the first two
steps, i.e. births of $U_{1},\cdots,U_{n}$ followed by saddle moves
from $K$ to $K\sqcup U_{1}^{\prime}\sqcup\cdots\sqcup U_{n}^{\prime}$,
by $S_{1}$, and denote the composition of the rest by $S_{2}$. Furthermore,
denote the self-concordance of $K\sqcup(\sqcup U_{k})\sqcup(\sqcup U_{k}^{\prime})$
given by the ``neck-passing'' of $U_{i}^{\prime}$ through $U_{j}$
by $S_{ij}$. Then we have 
\begin{align*}
\bar{C}\circ C & =S_{2}\circ S_{1},\\
\overline{C^{\prime}}\circ C^{\prime} & =S_{2}\circ\overline{S_{ij}}\circ S_{1},
\end{align*}
 where we have assumed without loss of generality that $a_{i}$ and
$D_{j}$ intersect positively at $p$. Now choose any $x\in F(K)$.
Then, under the multiplicativity isomorphism 
\[
F(K\sqcup(\sqcup U_{k})\sqcup(\sqcup U_{k}^{\prime}))\xrightarrow{\sim}\left(F(K)\otimes\left(\bigotimes_{k\ne j}F(U_{i})\right)\otimes\left(\bigotimes_{k\ne i}F(U_{i}^{\prime})\right)\right)\otimes F(U_{i}^{\prime})\otimes F(U_{j}),
\]
 we have $F(S_{1})(x)=\sum_{m}x_{m}^{\prime}\otimes y_{m}\otimes u$
for some $x_{m}^{\prime}\in F(K)\otimes\left(\bigotimes_{k\ne j}F(U_{i})\right)\otimes\left(\bigotimes_{k\ne i}F(U_{i}^{\prime})\right)$
and $y_{m}\in F(U_{i}^{\prime})$ by associativity. Then, by the functoriality
of $F$ and the weak neck-passing relation, we have 
\[
F(S_{ij})(x_{m}^{\prime}\otimes y_{m}\otimes u)=x_{m}^{\prime}\otimes F(S_{\phi})(y_{m}\otimes u)=x_{m}^{\prime}\otimes y_{m}\otimes u.
\]
 Hence $F(S_{ij}\circ S_{1})(x)=F(S_{ij})(F(S_{1})(x))=F(S_{1})(x)$,
which implies $F(\overline{S_{ij}}\circ S_{1})=F(S_{1})$ by functoriality.
But then we have 
\[
F(\overline{C^{\prime}}\circ C^{\prime})=F(S_{2})\circ F(\overline{S_{ij}}\circ S_{1})=F(S_{2})\circ F(S_{1})=F(\bar{C}\circ C).
\]
 Also, we have $n(C^{\prime})=n(C)-1$ by the construction of $C^{\prime}$.
Therefore we have $F(\bar{C})\circ F(C)=\text{id}$ by induction on
$n(C)$; this proves Theorem \ref{MainThm} in the associative case.

\subsubsection{The case when $F$ is Khovanov-like}

We now consider the case when $F$ is Khovanov-like, but not necessarily
associative. Then the proof in the associative case cannot be applied
directly, since the observation $F(S_{1})(x)=\sum_{m}x_{m}^{\prime}\otimes y_{m}\otimes u$
relies on the associativity of $F$. However we can still prove the
same observation using our new assumption.

Since $F$ is now Khovanov-like, the unit $u\in F(\text{unknot})$
spans the kernel of the death map $\epsilon$ by definition. Under
the same notation as used in the proof of the associative case, consider
the death cobordism $E_{j}$ of the link $K\sqcup(\sqcup U_{k})\sqcup(\sqcup U_{k}^{\prime})$.
Then the cobordism $E_{j}\circ S_{1}$ contains a closed sphere which
bounds a 3-ball in $S^{3}\times I$. Thus, by the multiplicativity
of $F$, we get $F(E_{j})\circ F(S_{1})=0$. But again by the multiplicativity,
under the isomorphism 
\[
F(K\sqcup(\sqcup U_{k})\sqcup(\sqcup U_{k}^{\prime}))\xrightarrow{\sim}\left(F(K)\otimes\left(\bigotimes_{k\ne j}F(U_{i})\right)\otimes\left(\bigotimes_{k\ne i}F(U_{i}^{\prime})\right)\right)\otimes F(U_{i}^{\prime})\otimes F(U_{j}),
\]
 the map $F(E_{j})$ is given by $\text{id}\otimes\text{id}\otimes\epsilon$.
Therefore the observation $F(S_{1})(x)=\sum_{m}x_{m}^{\prime}\otimes y_{m}\otimes u$
is still holds in this case, and the rest of the proof is the same.
This proves Theorem \ref{MainThm} in the Khovanov-like case.

\subsection{Proofs of the corollaries \ref{MainCor} and \ref{MainCor2}}

Finally, using Theorem \ref{MainThm}, which was proved in the last
subsection, we can now prove the Corollaries \ref{MainCor} and \ref{MainCor2}.
\begin{proof}[Proof of Corollary \ref{MainCor}]
All conic strong Khovanov-Floer theories are multiplicative and Khovanov-like.
So the corollary holds for all conic strong Khovanov-Floer theories.

For the case of Khovanov-Rozansky homology, it is proven in \cite{ehrig2017functoriality}
that Khovanov-Rozansky homology is a TQFT of links in $\mathbb{R}^{3}$.
Moreover, that result was upgraded in \cite{morrison2019invariants},
which proves that it is actually a TQFT of links in $S^{3}$. Since
Khovanov-Rozansky homology is multiplicative and associative by its
definition, we see that it induces injective maps for ribbon concordances
by Theorem \ref{MainThm}.
\end{proof}
\begin{proof}[Proof of Corollary \ref{MainCor2}]
If two vector spaces $V,W$ over $\mathbb{F}$ admit linear injections
$V\rightarrow W$ and $W\rightarrow V$, then $V\simeq W$.
\end{proof}

\section{$\mathbb{Z}$-grading and the neck passing relation}

\subsection{Nicely graded conic strong Khovanov-Floer theories}

Some strong Khovanov-Floer theories come with a $\mathbb{Z}$-grading.
We will say that a conic strong Khovanov-Floer theory $F$ is \textit{nicely
graded} if it carries a $\mathbb{Z}$-grading such that the cobordism
maps for $F$ are degree-preserving up to some degree shift, and that
$F(\text{unknot})$ is not concentrated in one grading. In such cases,
we can get a relation which is much stronger than the weak neck-passing
relation. 

Note that we have an isomorphism of graded vector spaces 
\[
F(\text{unknot})\simeq\mathbb{F}[X]/(X^{2}),
\]
 which maps to unit $u$ to $1$; the counit $\epsilon:F(\text{unknot})\rightarrow\mathbb{F}$
is given by sending $1$ to $0$ and $X$ to $1$. With respect to
such an identification, the assumption that $F$ is nicely graded
is equivalent to assuming that the unit $1$ and the element $X$
lie in different gradings.

Consider the two-component unknot $U_{2}=A\sqcup B$ and define $S_{\phi}$
as in the previous section. Then we have the following theorem.
\begin{thm}[Neck-passing relation]
Let $F$ be a nicely graded conic strong Khovanov-Floer theory. Then
$F(S_{\phi})=\text{id}$.
\end{thm}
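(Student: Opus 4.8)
The plan is to exploit the two features that distinguish the nicely graded setting from the general one: the $\mathbb{Z}$-grading constrains $F(S_\phi)$ to be block-diagonal, and the fact that $F(\mathrm{unknot})\cong\mathbb{F}[X]/(X^2)$ has its two basis elements $1$ and $X$ in \emph{distinct} gradings. Under the multiplicativity isomorphism $F(U_2)\cong F(A)\otimes F(B)$ a natural ordered basis is $\{1\otimes 1,\ 1\otimes X,\ X\otimes 1,\ X\otimes X\}$, and because $S_\phi$ is an isotopy its induced map lies in $\mathrm{Aut}(F(U_2))$. The first step is to record the grading of each basis vector; if $\deg 1 = p$ and $\deg X = q$ with $p\neq q$, the four vectors sit in gradings $2p$, $p+q$, $p+q$, $2q$, so $2p$ and $2q$ are distinct from each other and from $p+q$. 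Since $F(S_\phi)$ is grading-preserving (up to an overall shift that must be $0$ because $S_\phi$ has no critical points / is an isotopy of links), it must preserve the weight-$2p$ line, the weight-$2q$ line, and the weight-$(p+q)$ plane; in particular it fixes $1\otimes 1$ and $X\otimes X$ up to scalars, and restricts to a $2\times2$ block on $\mathrm{span}(1\otimes X,\ X\otimes 1)$.

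The second step is to pin those scalars and that block using the weak neck-passing relation already proved in the previous section. That relation says $F(S_\phi)(x\otimes u)=x\otimes u$ for every $x$, where $u=1$ is the unit; applying it with $x=1$ gives $F(S_\phi)(1\otimes 1)=1\otimes 1$, and with $x=X$ gives $F(S_\phi)(X\otimes 1)=X\otimes 1$. A symmetric version of the weak neck-passing relation — obtained by running the ``go-around'' isotopy so that $B$ circles $A$ instead, or equivalently by conjugating by the swap isotomorphism $F(A)\otimes F(B)\cong F(B)\otimes F(A)$ — gives $F(S_\phi)(u\otimes y)=u\otimes y$, i.e. $F(S_\phi)(1\otimes 1)=1\otimes1$ and $F(S_\phi)(1\otimes X)=1\otimes X$. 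Combining the two, $F(S_\phi)$ already fixes $1\otimes1$, $1\otimes X$, and $X\otimes1$; the only basis vector left unaccounted for is $X\otimes X$, and the grading argument from step one has already forced $F(S_\phi)(X\otimes X)=\lambda\, X\otimes X$ for some $\lambda\in\mathbb{F}^{\times}$.

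The third step is to show $\lambda = 1$. Here I would factor $X\otimes X$ through a cobordism to which the weak neck-passing relation, or functoriality of $F$, applies: for instance use that $X\otimes X = (m\otimes\mathrm{id})\circ(\text{something})$ is in the image of a map built from births and merges, or more directly, observe that $X = \epsilon^{*}(1)$ fails but that $X\otimes X$ can be written as $F$ applied to a dotted/decorated sub-cobordism of $S_\phi$ that the isotopy carries to itself, so that $F(S_\phi)$ must fix it. Concretely: $S_\phi\circ(\text{merge})$ is isotopic to $(\text{merge})$ by an isotopy supported away from the point at infinity (the same trick used to identify $B_1$ with $B_2$), and the merge map $F(U_2)\to F(\mathrm{unknot})$, being the Frobenius multiplication, has $X\otimes X$ in its... well, $m(X\otimes X)=0$, so that particular route is lossy; instead I would use the \emph{comultiplication}/splitting cobordism, where the image of $F(\text{split})\colon F(\mathrm{unknot})\to F(U_2)$ contains $1\otimes X + X\otimes 1$ and $X\otimes X$, and $S_\phi$ composed with the split is isotopic to the split, forcing $F(S_\phi)$ to fix $X\otimes X$ as well. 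This gives $\lambda = 1$ and hence $F(S_\phi)=\mathrm{id}$.

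The main obstacle is step three: the weak neck-passing relation only controls the ``$\otimes u$'' directions, and $X\otimes X$ is precisely the one basis vector not of that form, so the grading argument alone narrows it to a scalar but cannot kill that scalar without one more geometric input. Getting the right auxiliary cobordism — one that $S_\phi$ genuinely fixes and whose $F$-image detects $X\otimes X$ — and checking that the relevant isotopy really is supported away from infinity (so that functoriality applies cleanly, exactly as in the $B_1\simeq B_2$ argument) is the crux. I expect the clean solution is the splitting-cobordism argument just sketched, since the image of the comultiplication is a grading-homogeneous subspace containing $X\otimes X$, and $S_\phi$ commutes with it up to an isotopy that is visibly trivial in $S^3$.
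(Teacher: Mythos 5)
Your first two steps track the paper's proof: the weak neck-passing relation fixes $1\otimes 1$ and $X\otimes 1$, a ``birth-of-the-moving-component'' statement fixes $1\otimes X$, and the nicely graded hypothesis pins $F(S_\phi)(X\otimes X)=c\cdot X\otimes X$. (Two small caveats there: the degree shift is zero not because ``$S_\phi$ has no critical points'' -- the definition only promises degree preservation up to \emph{some} shift -- but because $F(S_\phi)$ fixes nonzero homogeneous elements; and the symmetric weak neck-passing relation is cleanest proved not by ``conjugating by the swap'' or asserting that $B$ circling $A$ gives the same cobordism, but by rerunning the birth argument with the birth cobordism $B_A$ of the moving component: $S_\phi\circ B_A$ is isotopic to $B_A$, which is exactly what the paper does.)

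The genuine gap is the step you yourself flag as the crux: killing the scalar $c$ on $X\otimes X$. Your proposed input, that $S_\phi\circ(\text{split})$ is isotopic to the split cobordism $\Delta$, is neither proved nor obviously true: unlike the birth/death disks used for the weak relation, after a split the moving component $A$ is not capped off by its own disk -- it is joined through the saddle to the rest of the surface -- so the ``slide the disk across infinity'' trick does not apply, and undoing the wrap of the $A$-leg of the pair of pants around the $B$-leg rel boundary is a nontrivial 4-dimensional claim that would need its own argument. The paper closes this step much more cheaply by \emph{post}composing rather than precomposing: $\overline{B_A}\circ S_\phi$ is isotopic to $\overline{B_A}$ (the same disk isotopy already used, turned upside down), and under multiplicativity $F(\overline{B_A})=\epsilon\otimes\mathrm{id}$ sends $X\otimes X$ to $\epsilon(X)\cdot X=X\neq 0$; applying this to $F(S_\phi)(X\otimes X)=c\cdot X\otimes X$ gives $c\cdot X=X$, hence $c=1$. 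The point you missed is that you do not need a cobordism whose image contains $X\otimes X$ and which $S_\phi$ fixes; it suffices to find a cobordism that $S_\phi$ composes trivially with and that does not annihilate $X\otimes X$, and the counit on the moving factor does exactly that since $\epsilon(X)=1$ while $m(X\otimes X)=0$ made the merge useless. As written, your argument is incomplete at precisely this step, so it does not yet constitute a proof.
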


\begin{proof}
Consider the birth cobordism $B_{A}$ for the component A, i.e. cobordism
given by 
\[
B_{A}=(\text{birth for }A)\cup(\text{cylinder for }B).
\]
 Then $B_{A}$ is an oriented link cobordism from $B$ to $U_{2}$,
and we have $F(B_{A})(1)=X\otimes1$, where we are taking the identification
\[
F(U_{2})\simeq\mathbb{F}[X]/(X^{2})\otimes\mathbb{F}[X]/(X^{2}),
\]
 and the first component in the tensor product corresponds to the
component $A$ of $U_{2}$. But then $S_{\phi}\circ B_{A}$ is isotopic
to $B_{A}$, so we have 
\[
F(S_{\phi})(1\otimes X)=F(S_{\phi})(F(B_{A})(1))=F(S_{\phi}\circ B_{A})(1)=F(B_{A})(1)=1\otimes X,
\]
 so the map $F(S_{\phi})$ fixes $1\otimes X$. Similarly, we can
see that $F(S_{\phi})$ also fixes $1\otimes1$.

By the weak neck-passing relation, we already know that $F(S_{\phi})$
also fixes $X\otimes1$. Thus it remains to prove that $F(S_{\phi})(X\otimes X)=X\otimes X$.
By the assumption that $F$ is nicely graded, we know that the $2\cdot\mathbf{gr}(X)$-graded
piece of $F(U_{2})$ has rank $1$, generated by $X\otimes X$. Also,
we know that the grading shift of $F(S_{\phi})$ is $0$ by the weak
neck-passing relation. Thus we already know that $F(S_{\phi})(X\otimes X)=c\cdot X\otimes X$
for some scalar $c\in\mathbb{F}$. However, using the ``upside-down''
version of our argument, we can prove that $c=1$ as follows:
\[
c=F(\overline{B_{A}})(c\cdot X\otimes X)=F(\overline{B_{A}}\circ S_{\phi})(X\otimes X)=F(\overline{B_{A}})(X\otimes X)=X.
\]
 Therefore we deduce that $F(S_{\phi})=\text{id}$.
\end{proof}
Using the above theorem, we can actually prove a stronger statement,
although it will not be used in this paper. Let $L_{0}$ be a link
and $L=L_{0}\sqcup U$, where $U$ is an unknot. Choose any component
$K\subset L_{0}$, and a meridian $m$ of $K$. Then we can consider
the self-isotopy $\phi_{L_{0},K}$ of $L$ defined by moving $U$
along $m$. As in the neck-passing relation, we can consider the link
cobordism $S_{L_{0},K}$, defined as 
\[
S_{L_{0},K}=\bigcup_{t\in I}(\phi_{t}(L)\times\{t\})\subset S^{3}\times I.
\]
 Then, for any link TQFT $F$, we can consider the morphism $F(S_{L_{0},K})$. 
\begin{cor}[Strong neck-passing relation]
Let $F$ be a nicely graded conic strong Khovanov-Floer theory. Then
for any choice of $L_{0}$ and $K$, the map $F(S_{L_{0},K})$ is
the identity.
\end{cor}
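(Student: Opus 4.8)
The plan is to replace the component $K$ by a freshly born disjoint unknot and then invoke the neck-passing relation. Fix the point $p\in K$ at which the meridian $m$ is based. First I would introduce three auxiliary cobordisms: the birth $b_{B}:L\to L\sqcup B$ of a tiny unknot $B$ placed next to $p$ and split from everything; the saddle $\mu_{B}:L\sqcup B\to L$ which bands $B$ into $K$ along a short arc running from $B$ to $p$, its target being $L$ up to the evident isotopy since $K$ merely absorbs a trivial circle; and the ``go-around'' self-cobordism $S_{\phi}$ of the two-component unlink $U\sqcup B$, as in the neck-passing relation, in which $U$ is carried once around $B$.

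The geometric heart of the argument is the isotopy
\[
S_{L_{0},K}\ \sim\ \mu_{B}\circ(\text{id}_{L_{0}}\sqcup S_{\phi})\circ b_{B}.
\]
To establish it, I would observe that in the right-hand cobordism the subsurface swept out by $B$ together with the piece of $K$ near $p$ is an annulus joining $K$ to $K$ that merely bulges out toward the location of $B$, while $U$ winds exactly once around that bulge. Retracting the bulge converts this annulus into the trivial cylinder on $L_{0}$ and converts the winding of $U$ into a single winding around a meridian of $K$, which is exactly $S_{L_{0},K}$. The one thing requiring verification is that the band of $\mu_{B}$ and the winding loop of $U$ can be kept disjoint throughout the retraction; since $B$ is tiny and split from $L$ while $U$ winds around it, and the band may be taken to approach $B$ from the side of $K$, this is a routine check.

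Granting the isotopy, the rest is a two-line computation. By functoriality of $F$, by multiplicativity (which splits the sublink $U\sqcup B$ off from $L_{0}$), and by the neck-passing relation $F(S_{\phi})=\text{id}$,
\[
F(S_{L_{0},K})=F(\mu_{B})\circ(\text{id}_{F(L_{0})}\otimes F(S_{\phi}))\circ F(b_{B})=F(\mu_{B})\circ F(b_{B})=F(\mu_{B}\circ b_{B}),
\]
and $\mu_{B}\circ b_{B}$ births $B$ next to $p$ and immediately bands it back into $K$, hence is isotopic to the product cobordism $L\times I$; therefore $F(S_{L_{0},K})=\text{id}$. The only real work lies in the isotopy lemma of the second paragraph. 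I also note that the argument relies on the hypothesis on $F$, namely that it is a nicely graded conic strong Khovanov-Floer theory, only through the single equality $F(S_{\phi})=\text{id}$ together with multiplicativity, and is in particular insensitive to the characteristic of $\mathbb{F}$.
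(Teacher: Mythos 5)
Your proposal is correct, but it reaches the conclusion by a genuinely different route than the paper. The paper's proof post-composes $S_{L_{0},K}$ with a pinch-off saddle $S_{a}$ that splits a small unknot off $K$, chosen so that the meridian $m$ becomes a meridian of that new, split unknot; it then uses that $F(S_{a})$ is injective (a death cobordism gives a left inverse), the commutation isotopy $S_{L_{0}\cup U,U}\circ S_{a}\sim S_{a}\circ S_{L_{0},K}$, and multiplicativity plus the neck-passing relation to identify the other vertical map with the identity, finally cancelling $F(S_{a})$. You instead factor $S_{L_{0},K}$ itself as $\mu_{B}\circ(\mathrm{id}\sqcup S_{\phi})\circ b_{B}$, so that the middle piece is an honest split cobordism; note that for multiplicativity you must arrange a single ball, disjoint from $L_{0}$, containing $U\sqcup B$ together with the entire go-around annulus, which is possible because $U$ is split from $L_{0}$ and $B$ is split from everything, so the whole motion of $U$ can be carried out inside that ball. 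Then multiplicativity, the neck-passing relation $F(S_{\phi})=\mathrm{id}$, and the birth--merge cancellation $\mu_{B}\circ b_{B}\sim L\times I$ finish the argument with no injectivity step at all. Your isotopy lemma is the exact analogue of the paper's commutation isotopy (indeed $\overline{\mu_{B}}$ is essentially the paper's pinch saddle $S_{a}$), and your sketch is sound at the same level of rigor as the paper's picture-based isotopies: slide the merge band, whose support can be kept disjoint from the go-around track, to a time before the go-around, cancel it against the birth, and conjugate by the finger-straightening isotopy, which carries the meridian of the $B$-strand to $m$. What your route buys is a more direct proof without the left-inverse bookkeeping; what the paper's buys is that its geometric input is phrased as a commutation of two cobordisms rather than a three-stage factorization. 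As you observe, both arguments use the hypothesis on $F$ only through $F(S_{\phi})=\mathrm{id}$ and multiplicativity.
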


\begin{proof}
Consider the saddle cobordism with respect to an arc $a$ satisfying
the following conditions:
\begin{itemize}
\item $a$ is interior-disjoint from $L$, and its boundary points $p,q$
lie on $K$, at which $a$ is transverse to $K$.
\item Taking saddle of $L\cup m$, where $m$ is a meridian of $K$, along
$a$, gives the link $L\cup\text{(Hopf link})$.
\end{itemize}
Then the saddle cobordism $S_{a}$ from $L$ to $L\cup\text{unknot}$
admits a left inverse, which is the death cobordism of the newly created
unknot component. Thus $F(S_{a})$ is injective. 

Now consider the following diagram.
\[
\xymatrix{L\ar[rr]^{S_{a}}\ar[d]^{S_{L_{0},K}} &  & L\cup U\ar[d]^{S_{L_{0}\cup U,U}}\\
L\ar[rr]^{S_{a}} &  & L\cup U
}
\]
 Since $S_{L_{0}\cup U,U}\circ S_{a}$ is isotopic to $S_{a}\circ S_{L_{0},K}$,
we get the following commutative square. Note that the square on the
right side is due to the multiplicativity of $F$.
\[
\xymatrix{F(L)\ar[rr]^{F(S_{a})}\ar[d]^{F(S_{L_{0},K})} &  & F(L\cup U)\ar[d]^{F(S_{L_{0}\cup U,U})}\ar[rr]^{\simeq} &  & F(L)\otimes_{R}F(U)\ar[d]^{\text{id}\otimes F(S_{U,U})}\\
F(L)\ar[rr]^{F(S_{a})} &  & F(L\cup U)\ar[rr]^{\simeq} &  & F(L)\otimes_{R}F(U)
}
\]
 But we already know that $F(S_{U,U})$ is the identity. Therefore,
by the injectivity of $F(S_{a})$, we deduce that $F(S_{L_{0},K})=\text{id}$.
\end{proof}

\subsection{Knot Floer homology}

The above proof cannot be used directly to prove that ribbon concordances
induce injective maps between knot Floer homology, because of the
following reasons:
\begin{itemize}
\item Knot Floer homology is not a TQFT of links and link cobordisms, but
rather a TQFT of decorated links and decorated link cobordisms.
\item Knot Floer homology is a reduced theory, i.e. we have a natural splitting
\[
HFK^{\circ}(L_{1}\sqcup L_{2},P_{1}\sqcup P_{2})\simeq HFK^{\circ}(L_{1},P_{1})\otimes HFK^{\circ}(L_{2},P_{2})\otimes V
\]
 where $\circ$ is either hat or minus flavor and and $V=\mathbb{F}^{2}$.
\end{itemize}
Here, we recall that a decorated link is a link together with $z$-basepoints
and $w$-basepoints which occur in alternating way, so that each component
has at least two basepoints. Also, decorated link cobordism is a splitting
of a given cobordism into two subsurfaces such that one contains all
$z$-basepoints and the another contains all $w$-basepoints. For
more details, see \cite{juhasz2018computing} and \cite{zemke2019link}.

Now consider the 2-component unknot $U_{2}$, together with the decoration
$P$, so that each component of $U_{2}$ has one $z$-basepoint and
$w$-basepoint. Then we can construct a decoration $P_{\phi}$ on
the ``go-around'' cobordism $S_{\phi}$ from $U_{2}$ to itself,
so that for each cylinder component $C\subset S_{\phi}$, the decoration
$P_{\phi}|_{C}$ is given by Figure \ref{Fig6}.

\begin{figure}
\resizebox{.3\textwidth}{!}{\includegraphics{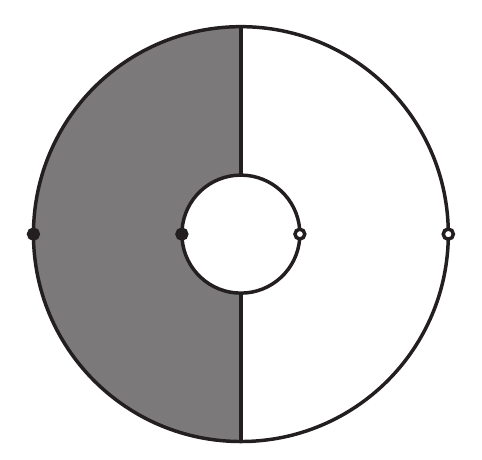}}
\caption{\label{Fig6}The decoration $P_{\phi}$ on the cylinder component
$C$.}
\end{figure}

Of course, the decoration $P_{\phi}$ on $S_{\phi}$ is not uniquely
defined. However we can choose one anyway, which will give us a map
\[
HFK^{\circ}(S_{\phi},P_{\phi})\,:\,HFK^{\circ}(U_{2},P)\rightarrow HFK^{\circ}(U_{2},P),
\]
 and this map is an automorphism because the decorated cobordism $(S_{\phi},P_{\phi})$
obviously has an inverse.

Now, when $\circ=\text{hat}$, then we have 
\[
\widehat{HFK}(U_{2},P)\simeq\mathbb{F}[\frac{1}{2}]\oplus\mathbb{F}[-\frac{1}{2}],
\]
 and when $\circ=\text{minus}$, we have 
\[
HFK^{-}(U_{2},P)\simeq\mathbb{F}[U][\frac{1}{2}]\oplus\mathbb{F}[U][-\frac{1}{2}].
\]
 In either case, the only Maslov grading-preserving automorphism of
$HFK^{\circ}(U_{2},P)$ , where $\circ$ is either the minus or hat
flavor, is the identity. Furthermore, the only automorphism of $\widehat{HFK}(U_{2},P)$
which has a constant grading shift is the identity, which has zero
grading shift. Hence, in either hat-flavor or minus-flavor, the grading
shift of $HFK^{\circ}(S_{\phi},P_{\phi})$ is zero, and thus we have
\[
HFK^{\circ}(S_{\phi},P_{\phi})=\text{id}.
\]
 Therefore, by repeating our proof in the previous section, but now
using the splitting formula 
\[
HFK^{\circ}(L_{1}\sqcup L_{2},P_{1}\sqcup P_{2})\simeq HFK^{\circ}(L_{1},P_{1})\otimes HFK^{\circ}(L_{2},P_{2})\otimes V
\]
 for knot Floer homology, together with the splitting formula for
disjoint unions of cobordisms, given by 
\[
HFK^{\circ}(S_{1}\sqcup S_{2},P_{S_{1}}\sqcup P_{S_{2}})=HFK^{\circ}(S_{1},P_{S_{1}})\otimes HFK^{\circ}(S_{2},P_{S_{2}})\otimes\text{id}_{V},
\]
 we deduce that every ribbon concordance induces an injective map
between $HFK$, in both hat- and minus-flavor.
\begin{rem}
Using the arguments in the last section to knot Floer homology, we
can easily see that neck-passing relation and strong neck-passing
relation hold for knot Floer homology. Of course we should choose
a decoration on our link cobordisms as in Figure \ref{Fig6}.
\end{rem}

\section{$\widehat{HF}$ of the branched double cover}

\subsection{An alternative proof of the injectivity of $HF^{\circ}\circ\Sigma$
for hat- and minus-flavors}

Consider the Heegaard Floer homology of the double branched cover,
defined as the link TQFT 
\[
L\subset S^{3}\mapsto HF^{\circ}(\Sigma(L)),
\]
 where we take the flavor $\circ$ to be either hat or minus. Then
the resulting TQFT satisfies functoriality for link cobordisms, defined
by 
\[
\text{cobordism }S\mapsto\text{map }F_{\Sigma(S)}^{\circ},
\]
 but this carries a similar problem as in the case of knot Floer homology. 

To be precise, the problem is the following. Although the assignment
$L\mapsto HF^{\circ}(\Sigma(L)\sharp(S^{1}\times S^{2}))$ is a (unreduced)
conic strong Khovanov-Floer theory, the assignment $L\mapsto HF^{\circ}(\Sigma(L))$
is not, since it satisfies a reduced version of multiplicativity 
\[
HF^{\circ}(\Sigma(L_{1}\sqcup L_{2}))\simeq HF^{\circ}(\Sigma(L_{1}))\otimes HF^{\circ}(\Sigma(L_{2}))\otimes V,
\]
 where the isomorphism is again natural with respect to cobordism
maps. However, since we have 
\[
HF^{\circ}(\Sigma(U_{2}))\simeq HF^{\circ}(S^{1}\times S^{2}),
\]
 the only degree-preserving automorphism of $HF^{\circ}(\Sigma(U_{2}))$
is the identity. Thus, using the same argument used in the knot Floer
case, we see that $\widehat{HF}\circ\Sigma$ satisfies the neck-passing
relation. Therefore, for any ribbon concordance $C:K_{1}\rightarrow K_{2}$,
the cobordism map $F_{\Sigma(C)}^{\circ}$ is injective, as already
shown in \cite{lidman2019heegaard} using a different method.

\subsection{Involutions on $\widehat{HF}(\Sigma(K))$}

Since $F_{\Sigma(\bar{C})}^{\circ}F_{\Sigma(C)}^{\circ}=\text{id}$
by the neck-passing relation, we actually know that $F_{\Sigma(C)}^{\circ}$
induces an inclusion of $HF^{\circ}(\Sigma(K_{1}))$ in $HF^{\circ}(\Sigma(K_{2}))$
in a way that it becomes a direct summand. This gives a very strong
restriction on the deck transformation action(which we will denote
as $\tau$) and the $\iota$-involution(which arises naturally in
the construction of involutive Floer homology in \cite{hendricks2017involutive})
on $HF^{\circ}(\Sigma(K_{1}))$ when $K_{2}$ satisfies some nice
conditions.

We briefly recall the definition of the two involutions $\tau$ and
$\iota$. By the naturality of Heegaard Floer theory, due to Juhasz
and Thurston\cite{juhasz2012naturality}, for any 3-manifold $M$
with a basepoint $z$, the pointed mapping class group $\mathbf{Mod}(M,z)$
acts on $\widehat{HF}(M)$. When $M=\Sigma(K)$ and $z\in K$, the
deck transformation of $\Sigma(K)\rightarrow S^{3}$ fixes $z$, thus
gives a $\mathbb{Z}_{2}$-action $\tau$ on $\widehat{HF}(M)$.

The involution $\iota$ is defined in a much more subtle way. Choose
any Heegaard diagram $(\Sigma,\boldsymbol{\alpha},\boldsymbol{\beta},z)$
representing $\Sigma(K)$. Then we have the identity map 
\[
\text{id}:\widehat{CF}(\Sigma,\boldsymbol{\alpha},\boldsymbol{\beta},z)\rightarrow\widehat{CF}(\bar{\Sigma},\boldsymbol{\beta},\boldsymbol{\alpha},z),
\]
 and since both $(\Sigma,\boldsymbol{\alpha},\boldsymbol{\beta},z)$
and $(\bar{\Sigma},\boldsymbol{\beta},\boldsymbol{\alpha},z)$ represent
$\Sigma(K)$, we have a naturality map 
\[
f:\widehat{CF}(\bar{\Sigma},\boldsymbol{\beta},\boldsymbol{\alpha},z)\rightarrow\widehat{CF}(\Sigma,\boldsymbol{\alpha},\boldsymbol{\beta},z),
\]
 which is defined uniquely up to chain homotopy. Then $f\circ\text{id}$
is a homotopy involution, so the induced automorphism on $\widehat{HF}(\Sigma(K))$
is a uniquely determined involution, which we denote as $\iota$.

As shown in \cite{alfieri2019connected}, the behaviors of two involutions
$\tau$ and $\iota$ of $\widehat{HF}(\Sigma(K))$ are a bit different:
sometimes they are identical, whereas sometimes they are not. To be
precise, we know the following:
\begin{itemize}
\item When $K$ is quasi-alternating, then $\tau$ and $\iota$ are both
trivial.
\item When $K$ is an odd torus knot, then $\tau$ is trivial, but $\iota$
is nontrivial in general.
\item When $K$ is a Montesinos knot, then $\tau=\iota$.
\end{itemize}
Note that, in the Montesinos case, there exists a $\iota$-invariant
basis of $\widehat{HF}(\Sigma(K))$ such that the action of $\iota$
leaves exactly one basis element fixed, as shown in \cite{dai2017involutive}.
Furthermore, it is straightforward to see that $\tau$ and $\iota$
always commute, i.e. $\tau\circ\iota=\iota\circ\tau$. Using these
results, we can now prove Theorem \ref{MainApp}.
\begin{proof}[Proof of Theorem \ref{MainApp}]
Suppose that $K_{0}$ is ribbon concordant to $K_{1}$ by a ribbon
concordance $C$, and let $\sigma$ denote an involution, which is
either $\tau$, $\iota$, or $\tau\circ\iota$. Then the involution
$\sigma$ gives $\mathbb{F}[\mathbb{Z}_{2}]$-module structures on
$\widehat{HF}(\Sigma(K_{0}))$ and $\widehat{HF}(\Sigma(K_{1}))$.
Furthermore, the cobordism map $\hat{F}_{\Sigma(C)}$ commutes with
$\sigma$ (since it commutes with both $\tau$ and $\iota$; see \cite{alfieri2019connected}
and \cite{hendricks2017involutive}), and $\hat{F}_{\Sigma(\bar{C})}\hat{F}_{\Sigma(C)}=\text{id}$,
so $\widehat{HF}(\Sigma(K_{0}))$ is a $\mathbb{F}[\mathbb{Z}_{2}]$-module
direct summand of $\widehat{HF}(\Sigma(K_{1}))$.

But it is obvious that every finitely generated $\mathbb{F}[\mathbb{Z}_{2}]$-module
$M$ can be uniquely represented as 
\[
M=\mathbb{F}^{m_{M}}\oplus(\mathbb{F}\cdot v\oplus\mathbb{F}\cdot\sigma(v))^{n_{M}},
\]
 so that if an $\mathbb{F}[\mathbb{Z}_{2}]$-module $M$ is a direct
summand of $N$, then $m_{M}\le m_{N}$ and $n_{M}\le n_{N}$. This
proves the theorem.
\end{proof}
\bibliographystyle{amsalpha}
\bibliography{ribconc}

\newcommand{\etalchar}[1]{$^{#1}$}
\providecommand{\bysame}{\leavevmode\hbox to3em{\hrulefill}\thinspace}
\providecommand{\MR}{\relax\ifhmode\unskip\space\fi MR }
\providecommand{\MRhref}[2]{%
  \href{http://www.ams.org/mathscinet-getitem?mr=#1}{#2}
}
\providecommand{\href}[2]{#2}
\begin{thebibliography}{LVVW19}

\bibitem[AKS19]{alfieri2019connected}
Antonio Alfieri, Sungkyung Kang, and Andras~I Stipsicz, \emph{Connected {F}loer
  homology of covering involutions}, arXiv preprint arXiv:1905.11613 (2019).

\bibitem[BHL19]{baldwin2019functoriality}
John~A Baldwin, Matthew Hedden, and Andrew Lobb, \emph{On the functoriality of
  {K}hovanov--{F}loer theories}, Advances in Mathematics \textbf{345} (2019),
  1162--1205.

\bibitem[DM17]{dai2017involutive}
Irving Dai and Ciprian Manolescu, \emph{Involutive {H}eegaard {F}loer homology
  and plumbed three-manifolds}, Journal of the Institute of Mathematics of
  Jussieu (2017), 1--41.

\bibitem[ETW17]{ehrig2017functoriality}
Michael Ehrig, Daniel Tubbenhauer, and Paul Wedrich, \emph{Functoriality of
  colored link homologies}, arXiv preprint arXiv:1703.06691 (2017).

\bibitem[Gor81]{gordon1981ribbon}
C~McA Gordon, \emph{Ribbon concordance of knots in the 3-sphere}, Mathematische
  Annalen \textbf{257} (1981), no.~2, 157--170.

\bibitem[HM{\etalchar{+}}17]{hendricks2017involutive}
Kristen Hendricks, Ciprian Manolescu, et~al., \emph{Involutive {H}eegaard
  {F}loer homology}, Duke Mathematical Journal \textbf{166} (2017), no.~7,
  1211--1299.

\bibitem[JM18]{juhasz2018computing}
Andr{\'a}s Juh{\'a}sz and Marco Marengon, \emph{Computing cobordism maps in
  link {F}loer homology and the reduced {K}hovanov {TQFT}}, Selecta Mathematica
  \textbf{24} (2018), no.~2, 1315--1390.

\bibitem[JMZ19]{juhasz2019knot}
Andr{\'a}s Juh{\'a}sz, Maggie Miller, and Ian Zemke, \emph{Knot cobordisms,
  torsion, and floer homology}, arXiv preprint arXiv:1904.02735 (2019).

\bibitem[JTZ12]{juhasz2012naturality}
Andr{\'a}s Juh{\'a}sz, Dylan~P Thurston, and Ian Zemke, \emph{Naturality and
  mapping class groups in {H}eegaard {F}loer homology}, arXiv preprint
  arXiv:1210.4996 (2012).

\bibitem[Kho99]{khovanov1999categorification}
Mikhail Khovanov, \emph{A categorification of the {J}ones polynomial}, arXiv
  preprint math/9908171 (1999).

\bibitem[KM11]{kronheimer2011khovanov}
Peter~B Kronheimer and Tomasz~S Mrowka, \emph{Khovanov homology is an
  unknot-detector}, Publications math{\'e}matiques de l'IH{\'E}S \textbf{113}
  (2011), no.~1, 97--208.

\bibitem[KR04]{khovanov2004matrix}
Mikhail Khovanov and Lev Rozansky, \emph{Matrix factorizations and link
  homology}, arXiv preprint math/0401268 (2004).

\bibitem[LVVW19]{lidman2019heegaard}
Tye Lidman, David~Shea Vela-Vick, and C-M~Michael Wong, \emph{Heegaard {F}loer
  homology and ribbon homology cobordisms}, arXiv preprint arXiv:1904.09721
  (2019).

\bibitem[LZ19]{levine2019khovanov}
Adam~Simon Levine and Ian Zemke, \emph{Khovanov homology and ribbon
  concordance}, arXiv preprint arXiv:1903.01546 (2019).

\bibitem[MWW19]{morrison2019invariants}
Scott Morrison, Kevin Walker, and Paul Wedrich, \emph{Invariants of 4-manifolds
  from {K}hovanov-{R}ozansky link homology}, arXiv preprint arXiv:1907.12194
  (2019).

\bibitem[OS05]{ozsvath2005heegaard}
Peter Ozsv{\'a}th and Zolt{\'a}n Szab{\'o}, \emph{On the {H}eegaard {F}loer
  homology of branched double-covers}, Advances in Mathematics \textbf{194}
  (2005), no.~1, 1--33.

\bibitem[Ras10]{rasmussen2010khovanov}
Jacob Rasmussen, \emph{Khovanov homology and the slice genus}, Inventiones
  mathematicae \textbf{182} (2010), no.~2, 419--447.

\bibitem[Sal17]{saltz2017strong}
Adam Saltz, \emph{Strong {K}hovanov-{F}loer theories and functoriality}, arXiv
  preprint arXiv:1712.08272 (2017).

\bibitem[Sar19]{sarkar2019ribbon}
Sucharit Sarkar, \emph{Ribbon distance and khovanov homology}, arXiv preprint
  arXiv:1903.11095 (2019).

\bibitem[Sza10]{szabo2010geometric}
Zolt{\'a}n Szab{\'o}, \emph{A geometric spectral sequence in {K}hovanov
  homology}, arXiv preprint arXiv:1010.4252 (2010).

\bibitem[Zem15]{zemke2015graph}
Ian Zemke, \emph{A graph {TQFT} for hat heegaard floer homology}, arXiv
  preprint arXiv:1503.05846 (2015).

\bibitem[Zem19a]{zemke2019knot}
\bysame, \emph{Knot {F}loer homology obstructs ribbon concordance}, arXiv
  preprint arXiv:1902.04050 (2019).

\bibitem[Zem19b]{zemke2019link}
\bysame, \emph{Link cobordisms and functoriality in link {F}loer homology},
  Journal of Topology \textbf{12} (2019), no.~1, 94--220.

\end{thebibliography}

\end{document}